\newtheorem{theorem}{Theorem}[section]
\newtheorem{proposition}[theorem]{Proposition}
\newtheorem{lemma}[theorem]{Lemma}
\newtheorem*{maintheorem}{Main Theorem}
\theoremstyle{definition}
\newtheorem{conjecture}[theorem]{Conjecture}
\newtheorem{example}[theorem]{Example}
\theoremstyle{remark}
\numberwithin{equation}{section}
\begin{document}


\title{Excluded minors and the ribbon graphs of knots}

\author[I.~Moffatt]{Iain Moffatt}
\address{Department of Mathematics,
Royal Holloway,
University of London,
Egham,
Surrey,
TW20 0EX,
United Kingdom.}
\email{iain.moffatt@rhul.ac.uk}

\subjclass[2010]{05C83, 05C10, 57M15}
\keywords{embedded graph, excluded minor, knot, link, minor,  ribbon graph}
\date{\today}

\begin{abstract}  
In this paper we consider minors of ribbon graphs (or, equivalently, cellularly embedded graphs). The theory of minors of ribbon graphs differs from that of graphs in that contracting loops is necessary and doing this can create additional vertices and components. 
Thus the ribbon graph minor relation is incompatible with the graph minor relation. We discuss excluded minor characterisations of minor closed families of ribbon graphs. Our main result is an excluded minor characterisation of the family of ribbon graphs that represent knot and link diagrams.
\end{abstract}

\maketitle

\section{Introduction and overview} 

One of the  deepest results in graph theory, the Robertson-Seymour Theorem \cite{RS}, is that graphs are  well-quasi-ordered under the graph minor relation. This theorem may be reformulated as stating that every minor-closed family of graphs is characterised by a finite set of excluded minors. However, although we know minor-closed families can be characterised by excluded minors, very few  explicit characterisations are known. (Perhaps the best-known being Wagner's Theorem which characterises planar graphs as those with no $K_5$ or $K_{3,3}$ minors.)

Rather than working with abstract graphs, in this paper we consider minors of cellularly embedded graphs which we realise as ribbon graphs. Ribbon graph minors differ from minors of abstract graphs as it is necessary to allow the contraction of loops (forbidding the contraction of loops results in infinite anti-chains, as in Example~\ref{ex.b}, and so ribbon graphs are not well-quasi-ordered under such a relation). Moreover, contracting a loop $e$ of a ribbon graph $G$  may result in ribbon graph $G/e$ with more vertices or components than the original one. Thus the underlying graphs of two ribbon graph minors need not be graph minors. 
We conjecture the analogue of the Robertson-Seymour Theorem for ribbon graphs:  every ribbon graph minor-closed family of ribbon graphs can be characterised by a finite set of excluded ribbon graph minors.  This conjecture leads to the problem of finding excluded ribbon graph minor characterisations of  ribbon graph minor-closed families of ribbon graphs. While it is fairly straightforward to find excluded ribbon graph minor characterisations of some families of ribbon graphs, such as orientable ribbon graphs, of course this is not always the case. Our main result here is an excluded ribbon graph minor characterisation of the set of ribbon graphs that represent knot and link diagrams.

There is a classical, and well-known way  to represent  link diagrams as signed plane graphs (see for example~\cite{Bo,EMMbook,We}). This construction provides a bridge between knot theory and graph theory,  and has found numerous applications in both of these areas.  In the construction, the over/under crossing structure of the link diagram is encoded by signs $+$ or $-$ on the edges of the plane graph. (The link diagram arises as the medial graph of the plane graph with the crossings determined by the signs). By considering graphs in orientable surfaces of higher genus,  Dasbach,  Futer,  Kalfagianni,  Lin and Stoltzfus in \cite{Da} (see also Turaev~\cite{Tu97}) explained how the crossing structure can be encoded in the topology of an embedded graph,  avoiding the need for signed graphs.
This idea has proved to be  very useful  and  has found many recent applications in knot theory, such as to knot polynomials, Khovanov homology, knot Floer homology, Turaev genus, quasi-alternating links,  the signature of a knot, the  determinant of a knot, and to hyperbolic knot theory (see, for example, \cite{Ab09,CKS07,CP,Ch,CV,Da,Detal2,DL10, FKP08,FKP09,HM,Lo08, Mo2,Tu97,VT10,Wi09}). Not only this, but the insights provided by the construction have led to new developments in graph theory (particularly for graph polynomials) and quantum field theory.

Although every link diagram can be represented by an embedded graph, not every embedded graph represents a link diagram. Given the applications, understanding  the class of embedded graphs that represent link diagrams is an important problem. The main result of this paper is an excluded  minor characterisation of this class which we state in terms of ribbon graphs:
\begin{maintheorem}\label{mt1}
Let $B_{\bar1}$, $B_3$, and $\theta_{t}$ be the ribbon graphs shown in Figure~\ref{f.2}.
Then a ribbon graph represents a link diagram if and only if it contains no ribbon graph minor equivalent to $B_{\bar1}$, $B_3$, or $\theta_{t}$.
\end{maintheorem}
Its worth noting that ribbon graphs of link diagrams are necessarily orientable and that $B_{\bar1}$ appears here only to ensure orientability. The Main Theorem could be restated as a ribbon graph represents a link diagram if and only if it is orientable and contains no ribbon graph minor equivalent to $B_3$, or $\theta_{t}$.

\medskip

I would  like to thank Neal Stoltzfus, Adam Lowrance and Mark Ellingham for helpful and stimulating discussions. I would also like to that the referees for their thoughtful comments.

\begin{figure}
\centering 
\subfigure[$B_{\bar{1}}$. ]{
 \includegraphics[scale=.5]{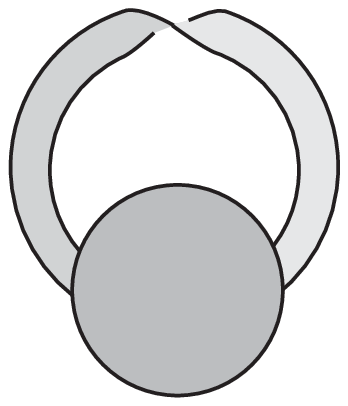}
\label{f.2a}
}
\hspace{1cm}
\subfigure[$B_3$. ]{
\includegraphics[scale=.5]{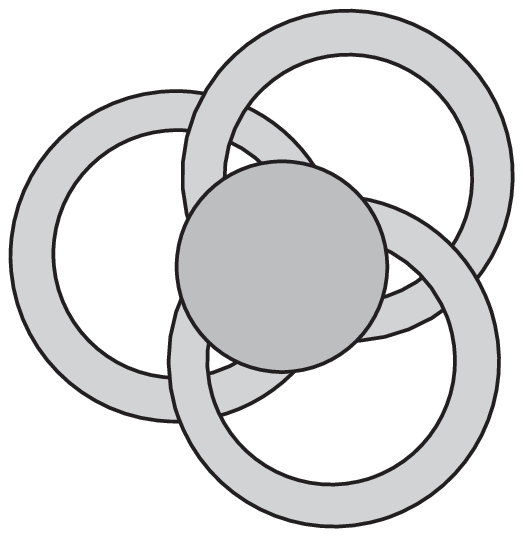}
\label{f.2b}
}
\hspace{1cm}
\subfigure[$\theta_t$. ]{
\includegraphics[scale=.5]{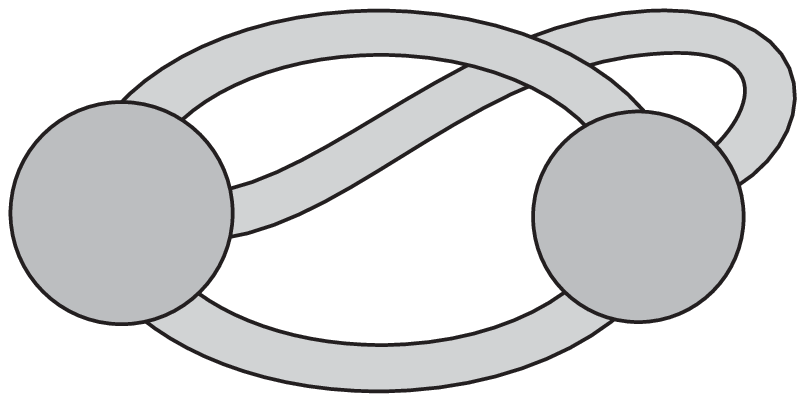}
\label{f.2c}
}
\caption{The excluded ribbon graph minors for the family of ribbon graphs that represent link diagrams.}
\label{f.2}
\end{figure}

\section{Ribbon graphs and their minors}

\subsection{Ribbon graphs}
We assume a familiarity with basic graph theory and topological graph theory, but give a brief review of ribbon graphs referring the reader to  \cite{EMMbook,GT87} for further details. (Note that ribbon graphs are called `reduced band decompositions' in \cite{GT87}.)
A {\em ribbon graph} $G =\left(  V(G),E(G)  \right)$ is a surface with boundary, represented as the union of two  sets of  discs: a set $V (G)$ of {\em vertices} and a set $E (G)$ of {\em edges}  such that: (1) the vertices and edges intersect in disjoint line segments;
(2) each such line segment lies on the boundary of precisely one
vertex and precisely one edge; and (3) every edge contains exactly two such line segments. See Figure~\ref{f.2} for some examples of ribbon graphs.
It is well-known that ribbon graphs are equivalent to cellularly embedded graphs and to band decompositions. (Ribbon graphs and band decompositions arise naturally from neighbourhoods of cellularly embedded graphs. On the other hand, topologically a ribbon graph is a  surface with boundary, capping-off the holes results in a \emph{band decomposition}, which gives rise to a cellularly embedded graph in the obvious way. Again, see \cite{EMMbook,GT87} for details.)  A {\em bouquet} is a ribbon graph with exactly one vertex. A ribbon graph is {\em orientable} if it is orientable when viewed as a surface, and  is {\em plane} if when viewed as a surface it is a  sphere with holes.  The {\em genus}, $g(G)$, of a ribbon graph $G$  is its genus when viewed as a surface. Its {\em Euler genus}, $\gamma(G)$,  is defined as $2g(G)$ if $G$ is orientable and $g(G)$ if $G$ is non-orientable.
Two ribbon graphs are {\em equivalent} if they describe equivalent cellularly embedded graph, and we consider ribbon graphs up to equivalence. This means ribbon graphs are equivalent if there is a homeomorphism taking one to the other that preserves the vertex-edge structure and the cyclic order of the half-edges at each vertex. The homeomorphism should be orientation preserving when the ribbon graphs are orientable.

At times we find it convenient to describe ribbon graphs using Chmutov's arrow presentations from \cite{Ch}. An \emph{arrow presentation} is a set of closed curves,  each with a collection of disjoint,  labelled arrows  lying on them where each label appears on precisely two arrows. A ribbon graph $G$ can be formed from an arrow presentation by  identifying each closed curve with the boundary of a disc (forming the vertex set of $G$). Then, for each pair of $e$-labelled arrows, take a disc (which will form an edge of $G$), orient its boundary, place two disjoint arrows on its boundary that point in the direction of the orientation, and identify each $e$-labelled arrow on this edge.  Conversely a ribbon graph can be described as an arrow presentation by arbitrarily labelling and orienting the boundary of each edge disc of $G$. Then on each arc where an edge disc intersects a vertex disc, place an arrow on the vertex disc, labelling the arrow with the label of the edge it meets and directing it consistently with the orientation of the edge disc boundary. The boundaries of the vertex set marked with these labelled arrows give the arrow-marked closed curves of an arrow presentation.  See Figure~\ref{f4} for an example.

\subsection{Ribbon graph minors}
Let $G$ be a ribbon graph, $e\in E(G)$, and $v\in V(G)$. Then $G-e$ denotes the ribbon graph obtained  from $G$ by  deleting the edge $e$, and  $G-v$ denotes the ribbon graph obtained  from $G$ by deleting the vertex $v$ and all of its incident edges. A ribbon graph $H$ is a {\em ribbon subgraph} of $G$ if it can be obtained from $G$ by deleting vertices and edges.  
If $u_1$ and $u_2$ are the (not necessarily distinct)  vertices incident to $e$, then $G/e$ denotes the ribbon graph obtained as follows:  consider the boundary component(s) of $e\cup u_1 \cup u_2$ as curves on $G$. For each resulting curve, attach a disc (which will form a vertex of $G/e$) by identifying its boundary component with the curve. Delete $e$, $u_2$ and $u_2$ from the resulting complex, to get the ribbon graph $G/e$. We say $G/e$ is obtained from $G$ by {\em contracting} $e$. See Table~\ref{tablecontractrg} for the local effect of contracting an edge of a ribbon graph. 

If $G$ is viewed as an arrow presentation then $G/e$ is obtained as follows. Suppose $\alpha$ and $\beta$ are the two $e$-labelled arrows. Connect the tip of $\alpha$ to the tail of $\beta$ with a line segment; and  connect the tip of $\beta$ to the tail of $\alpha$ with another line segment. Delete $\alpha$, $\beta$ and the arcs of the curves (or curve) on which they lie. The resulting arrow presentation describes $G/e$. Again see  Table~\ref{tablecontractrg}. 

    Note that contraction of non-loop edges of a ribbon graph is compatible with the  standard contraction of non-loop edges in cellularly embedded graphs, and coincides with the obvious idea of contracting a non-loop edge $e=(u_1,u_2)$ by making the disc $e\cup u_1 \cup u_2$ into a vertex.  However, we emphasise that when $e$ is a loop, $G/e$ may have more components and vertices than $G$. In particular, this means that the underlying graphs of two ribbon graph minors (which we define shortly) need not be graph minors, and this is where the main difference between graph minor theory and ribbon graph minor theory originates.

\begin{table}
\centering
\begin{tabular}{|c||c|c|c||c|}\hline
 &  non-loop & non-orientable loop&orientable loop& arrow presentation\\ \hline
\raisebox{6mm}{$G$} &
\includegraphics[scale=.25]{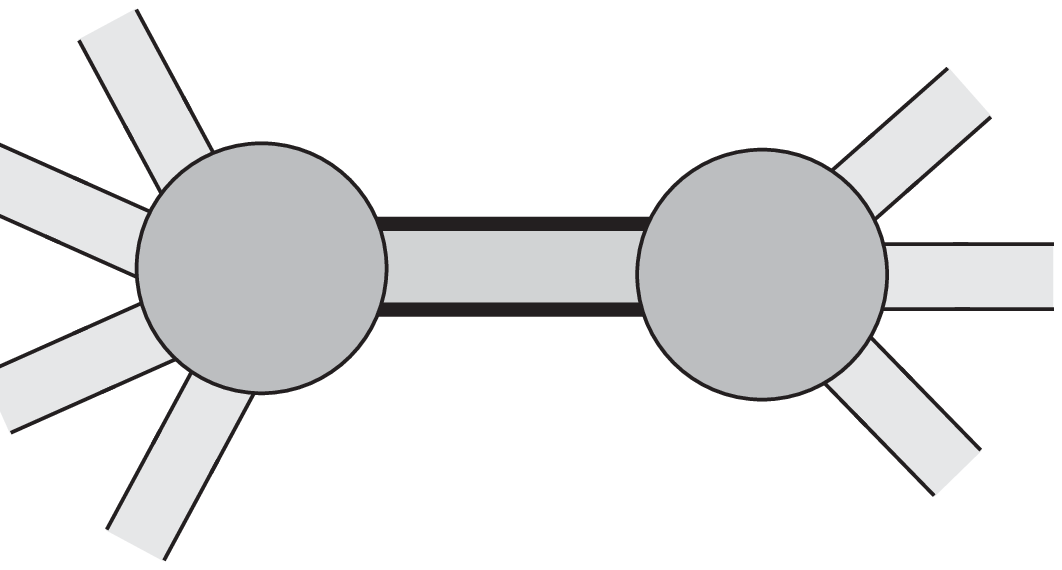} &\includegraphics[scale=.25]{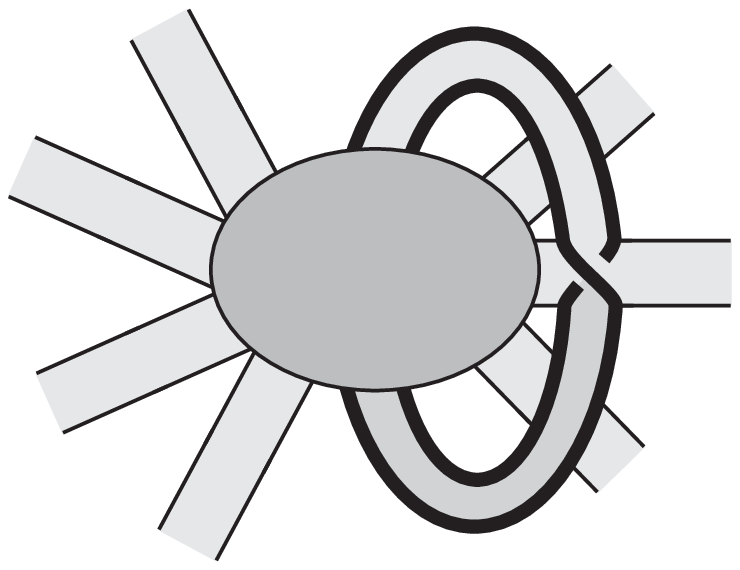}&\includegraphics[scale=.25]{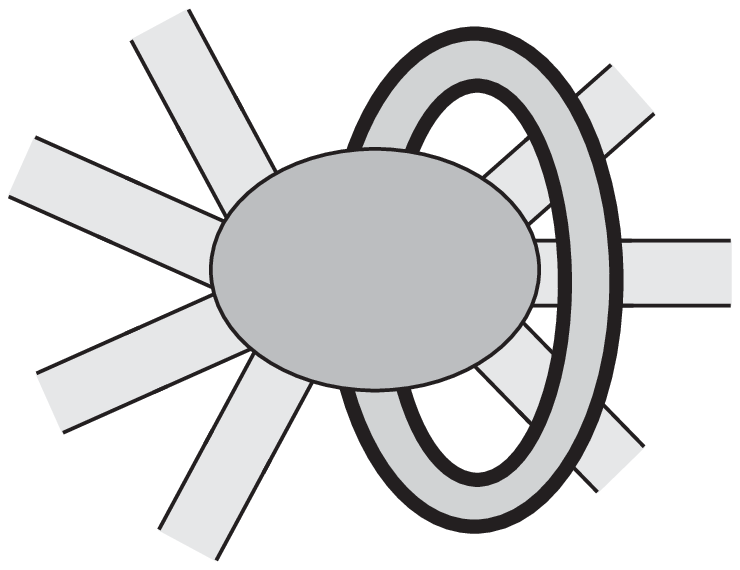} &\includegraphics[scale=.5]{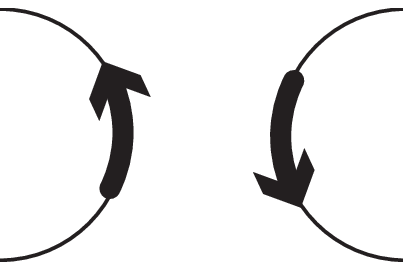}
\\ \hline
\raisebox{6mm}{$G-e$}
&
\includegraphics[scale=.25]{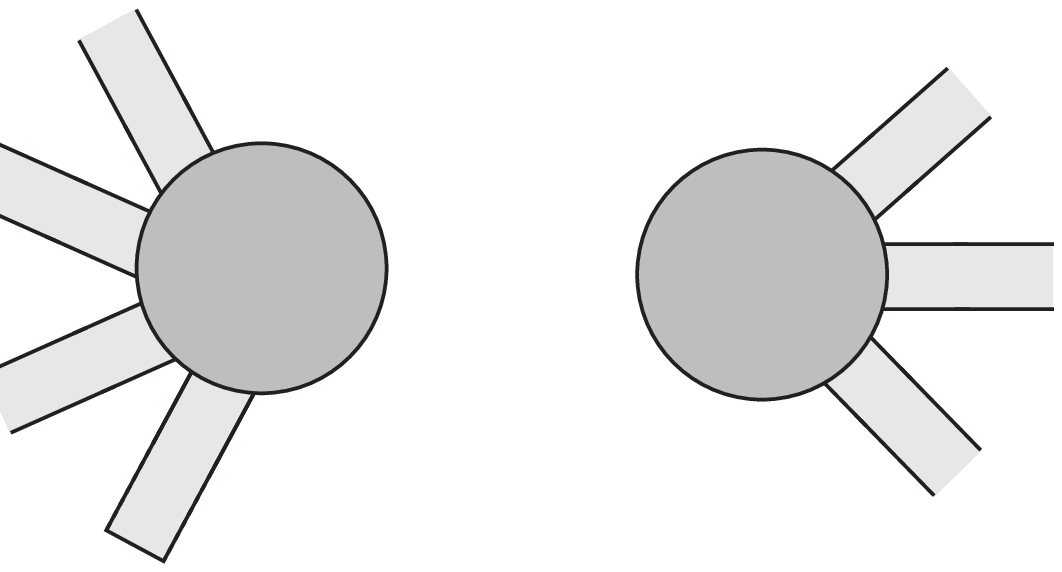} &\includegraphics[scale=.25]{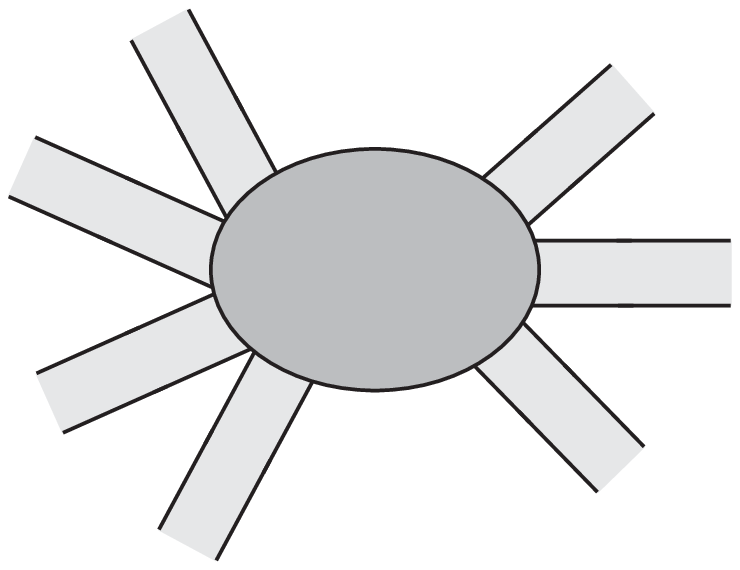}&\includegraphics[scale=.25]{ch4_35a} &\includegraphics[scale=.5]{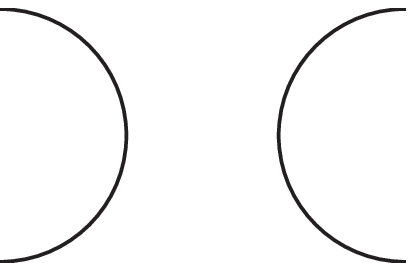}
\\ \hline
\raisebox{6mm}{$G/e$}
&
\includegraphics[scale=.25]{ch4_35a} &\includegraphics[scale=.25]{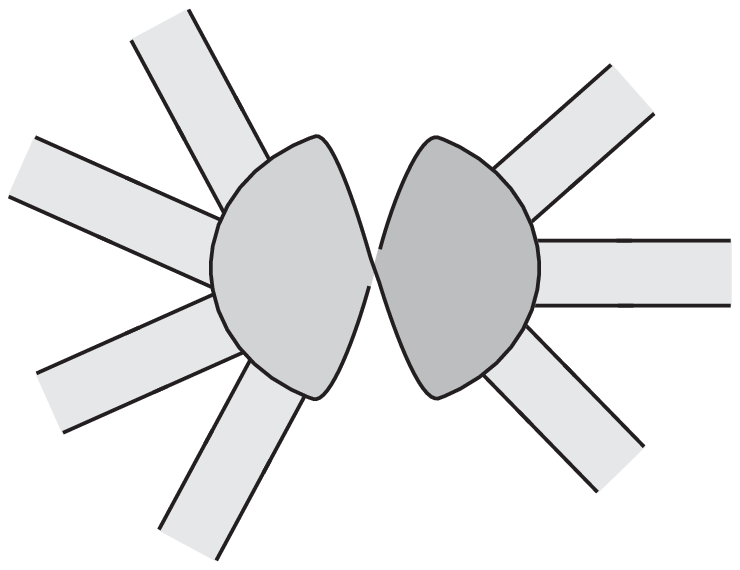}&\includegraphics[scale=.25]{ch4_38a} & \includegraphics[scale=.5]{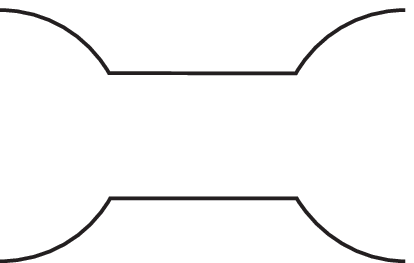}
\\ \hline\hline
\raisebox{6mm}{$G^{e}$} &
\includegraphics[scale=.25]{ch4_35} &\includegraphics[scale=.25]{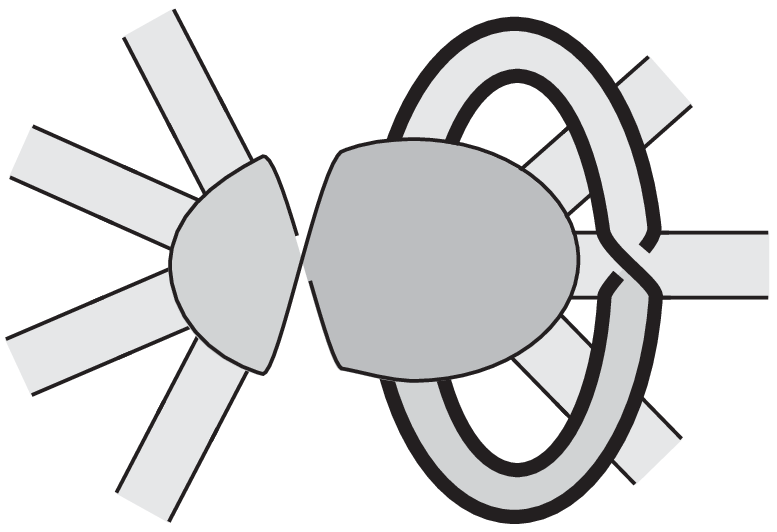} &\includegraphics[scale=.25]{ch4_38} &\includegraphics[scale=.5]{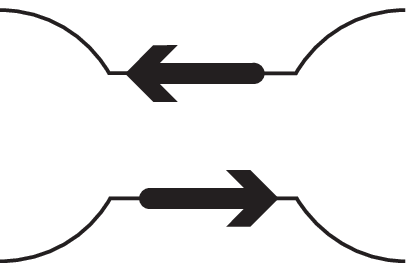}
\\ \hline
\end{tabular}
\caption{Operations on an edge $e$ (highlighted in bold) of a ribbon graph. The ribbon graphs are identical outside of the region shown.}
\label{tablecontractrg}
\end{table}

If multiple edges of a ribbon graph are contracted and/or deleted, the resulting graph does not depend on the order of the deletions and contractions. We say that a ribbon graph $H$ is a {\em ribbon graph minor} of a ribbon graph $G$ if $H$ is obtained from $G$ by a sequence of edge deletions, vertex deletions, or edge contractions. In addition, we say that $G$ has an {\em $H$-ribbon graph minor} if it has a ribbon graph minor equivalent to $H$.
We can assume without loss of generality that in the formation of a ribbon graph minor only isolated vertices are ever deleted. 
A set $S$ of ribbon graphs is {\em ribbon graph minor-closed} if for each $G\in S$ every ribbon graph minor of $G$ is  in $S$.

\medskip

A {\em quasi-ordering} is a reflexive and transitive relation.  A quasi-ordering $\leq$ on a set $X$ is a  {\em well-quasi-ordering} if it contains neither  an infinite antichain nor an infinite decreasing sequence $x_0>x_1>\cdots$. (An {\em  antichain} is a subset with the property that any two elements are incomparable.) The Robertson-Seymour Theorem states that graphs are well-quasi-ordered by the graph minor relation. For graphs, deleting and contracting a loop results in the same graph, and so loops need not be contracted. This is in sharp contrast to ribbon graph minors where  forbidding the contraction of loops results in infinite anti-chains as in the following example.

\begin{example}\label{ex.b}
Let $n\in \mathbb{N}$   and $B_n$  denote the orientable bouquet with edges  $e_1, \ldots, e_n$ that meet the vertex in the cyclic order 
$ e_2e_1e_3e_2e_4e_3\cdots e_{n}e_{n-1}e_1e_n$. (See Figures~\ref{f.2b} and~\ref{f.3a}.) 
Also let $\mathcal{F}=\{ B_{2k+1} \mid k\in \mathbb{N} \}$. If we forbid the contraction of loops in ribbon graph minors,  $\mathcal{F}$ is an  infinite anti-chain. However, when we allow loops to be contracted,  $B_{2k+1}$ is a ribbon graph minor of $B_{2k+3}$, for each $k\in \mathbb{N}$ (see Figure~\ref{f.3}). In particular, it follows  every ribbon graph in $\mathcal{F}$ has a $B_3$-ribbon graph minor. Observe that $B_n/e_n$ is not a graph minor of $B_n$.
\end{example}

\begin{figure} 
\centering 
\subfigure[$B_{n}$. ]{
\labellist \small\hair 2pt
\pinlabel {$e_2$} [l] at    52 3
\pinlabel {$e_1$} [l] at     123 40
\pinlabel {$e_n$} [l] at    154 122
\pinlabel {$e_{n-1}$} [l] at   123 195
\pinlabel {$e_{n-2}$} [l] at   52 240
\endlabellist
 \includegraphics[scale=.4]{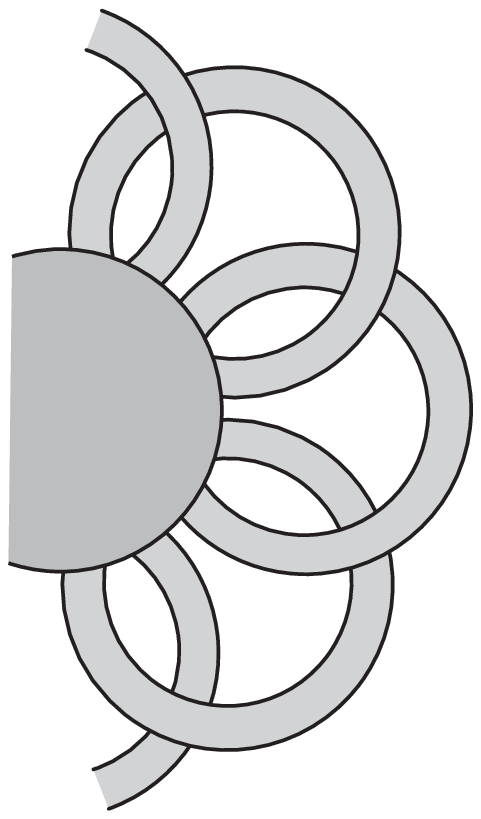}
\label{f.3a}
}
\hspace{1cm}
\labellist \small\hair 2pt
\pinlabel {$e_2$} [l] at    52 3
\pinlabel {$e_1$} [l] at     125 40
\pinlabel {$e_{n-1}$} [l] at   125 195
\pinlabel {$e_{n-2}$} [l] at   52 240
\endlabellist
\subfigure[$B_{n}/e_n$. ]{
\includegraphics[scale=.4]{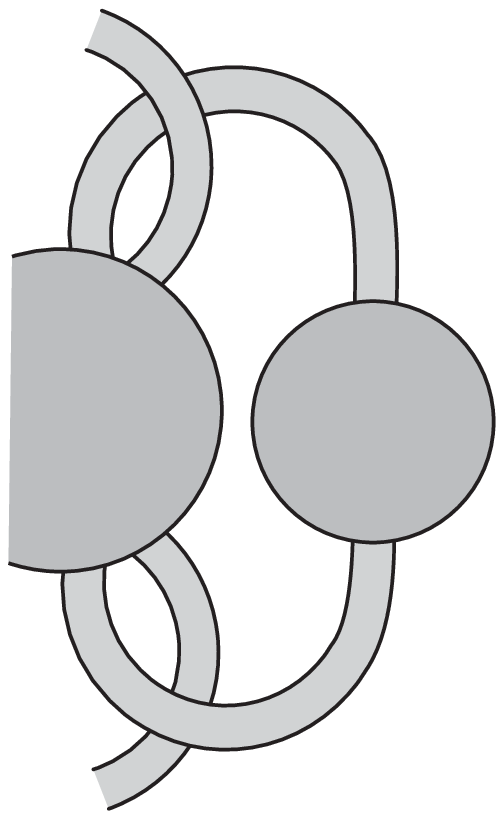}
\label{f.3b}
}
\hspace{1cm}
\labellist \small\hair 2pt
\pinlabel {$e_2$} [l] at     123 40
\pinlabel {$e_1$} [l] at    154 122
\pinlabel {$e_{n-2}$} [l] at   123 195
\endlabellist
\subfigure[$(B_{n}/e_n)/e_{n-1}$  $= B_{n-2}$. ]{
\includegraphics[scale=.4]{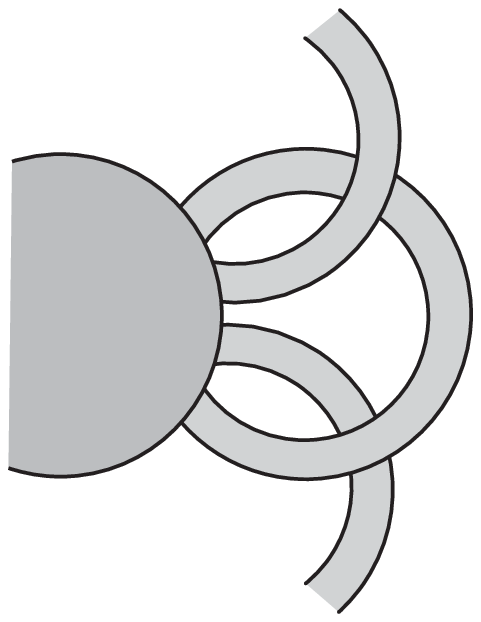}
\label{f.3c}
}
\caption{Recovering $B_{n-2}$ as a  ribbon graph minor of $B_{n}$.}
\label{f.3}
\end{figure}

\begin{conjecture}\label{c}
The set of ribbon graphs is well-quasi-ordered by the ribbon graph minor relation.
\end{conjecture}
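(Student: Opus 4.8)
The plan is to bootstrap from the Robertson--Seymour theorem \cite{RS}, viewing a ribbon graph as an abstract graph carrying a rotation system and a $\mathbb{Z}/2$-valued edge-signing, and to isolate the one genuinely new difficulty: contracting a loop (cf.\ Example~\ref{ex.b}) is not a graph-minor operation on the underlying graph, and can raise the number of vertices and components. Since deletions and contractions of \emph{non}-loop edges are compatible with ordinary graph minors, the strategy is to split the data of a ribbon graph into a ``skeleton part'', carried by its non-loop edges, and a ``loop part'', concentrated at the individual vertices, well-quasi-order each, and then glue the two orders together.

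The loop part is modelled by bouquets, since a bouquet records exactly the loop data at a single vertex. A bouquet is encoded by a (signed) chord diagram on a circle, equivalently by a symmetric binary matrix, i.e.\ a binary delta-matroid (an isotropic system in Bouchet's language), and on this class ribbon graph minors correspond to delta-matroid minors: contracting a chord is a fixed local surgery that replaces the diagram by a pair of smaller diagrams. So the first sub-goal is to well-quasi-order these delta-matroids under their minor relation, which one would attack by embedding them into an already well-quasi-ordered structure together with a Kruskal/Higman-type argument on the rooted trees of nested surgeries. This is already substantial: chord \emph{deletion} alone is not a well-quasi-ordering (the bouquets $B_{2k+1}$ of Example~\ref{ex.b} form an antichain under it), so the surgery operation is essential, and the resulting question mirrors the well-quasi-ordering conjecture for binary matroids, which is known only for classes of bounded branch-width (Geelen, Gerards and Whittle) and for bounded rank-width in the delta-matroid/vertex-minor setting (Oum), and open in general.

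Granting the bouquet case, one reinstates the non-loop edges. Now the underlying graph is genuinely present and its deletions and contractions are honest graph minors, so the natural hope is a labelled Robertson--Seymour theorem: label each vertex by its (now well-quasi-ordered) local loop configuration and order ribbon graphs by ``graph minor on the skeleton, together with $\leq$ on the labels''. If the ribbon graph minor order coincided with this labelled order, the labelled graph minor theorem would finish the proof.

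The main obstacle is that it does not coincide. Contracting a loop reshapes the skeleton --- creating new vertices, and merging or splitting pieces --- so it is not a labelled-graph-minor move, and the induced change on the rotation-system ``labels'' is not monotone for any evident ordering of them; the skeleton part and the loop part are coupled, and decoupling them is precisely what the Robertson--Seymour machinery does not hand us for free. A plausible remedy, in analogy with the way graphs embeddable in a fixed surface were well-quasi-ordered before the full graph minor theorem, would be to exhibit a ribbon-graph-minor-monotone parameter (a genus- or rank-type quantity) that controls this coupling and on which one can induct. Producing such a parameter, proving it is genuinely non-increasing under loop contraction, and showing it forces every strictly descending chain to terminate --- together with removing the bounded-width hypotheses on the matroid side --- is where the real work lies, and is why the statement remains a conjecture.
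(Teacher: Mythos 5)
The statement you are addressing is stated in the paper as a \emph{conjecture}, and the paper offers no proof of it; there is therefore no argument of the author's to compare yours against. More importantly, what you have written is not a proof but a research programme, and you say as much in your final sentence. Two essential steps are left as open problems. First, your reduction of the ``loop part'' to bouquets requires that the relevant class of (signed) chord diagrams, or equivalently binary delta-matroids, be well-quasi-ordered under the minor relation induced by loop contraction and deletion; as you note, the analogous well-quasi-ordering results are known only under bounded branch-width or rank-width hypotheses, and the unrestricted statement is itself open. Granting this sub-goal is granting a theorem at least as hard as large parts of the conjecture. Second, your gluing step fails for the reason you yourself identify: contracting a loop can increase the number of vertices and components of the underlying graph (cf.\ Example~\ref{ex.b} and the discussion following the definition of contraction), so the ribbon graph minor order is not a labelled refinement of the graph minor order on skeletons, and the labelled Robertson--Seymour theorem does not apply. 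The proposed remedy --- a ribbon-graph-minor-monotone parameter controlling the coupling --- is not exhibited, and no candidate is analysed. Without these two ingredients the argument does not get off the ground.

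To be clear about what would be salvageable: the observation that deletion and contraction of non-loop edges are compatible with ordinary graph minors is correct and is made in the paper, and the identification of loop contraction as the sole source of difficulty is the right diagnosis. But a proof must actually supply the well-quasi-ordering of the bouquet/delta-matroid layer and a mechanism for handling the non-monotone interaction between loop contraction and the skeleton; at present both are conjectural, so the statement remains exactly where the paper leaves it.
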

One of the consequences of Conjecture~\ref{c}, if it is true, is that every ribbon graph minor-closed class of ribbon graphs can be characterised in terms of a  finite set of excluded ribbon graph minors. The rest of this paper is concerned with finding an excluded ribbon graph minor characterisation of the set of ribbon graphs that represent link diagrams.

\section{An excluded ribbon graph minor characterisation of the ribbon graphs of links}

\subsection{Representing  link diagrams by ribbon graph}

Let $D\subset \mathbb{R}^2$ be a  link diagram. The {\em ribbon graph of $D$} (or the {\em All-A ribbon graph of $D$}), introduced in \cite{Da}, is formed as follows. 
Assign a unique label to each crossing of $D$. An {\em arrow marked A-smoothing} of a crossing $c$ of $D$,   is the replacement of the crossing $c$ with a pair of decorated curves as indicated in  Figures~\ref{f4a} and~\ref{f4b}. The diagrams are identical outside of the region shown. Replacing each crossing of $D$ with its arrow marked A-smoothing  results in an arrow presentation. The corresponding ribbon graph is  $\mathbb{A}(D)$. We say that a ribbon graph $G$ {\em represents a link diagram}, or is the {\em ribbon graph of a link diagram}, if $G=\mathbb{A}(D)$, for some link diagram $D$.
It is easy to see that $\mathbb{A}(D)$ is orientable for each diagram $D$. Note also that the vertices of $\mathbb{A}(D)$ correspond to the closed curves of the A-smoothing of $D$.

\begin{figure}
\centering
\subfigure[A crossing $c$.]{
\labellist
 \small\hair 2pt
\pinlabel {$c$}  at 37 50 
\endlabellist
\quad\quad\includegraphics[scale=0.65]{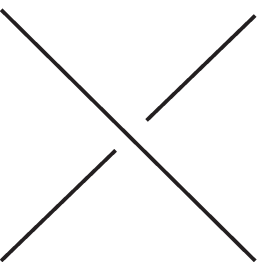} \quad\quad
\label{f4a}
}
\hspace{5mm}
\subfigure[An arrow  marked \mbox{A-smoothing} of $c$. ]{
\labellist
 \small\hair 2pt
\pinlabel {$c$}  at 30 28  
\pinlabel {$c$}  at 38 48 
\endlabellist
\quad\quad\includegraphics[scale=0.65]{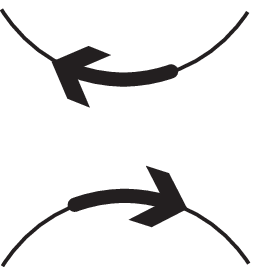} \quad\quad
\label{f4b}
}
\subfigure[A  link diagram $D$.]{
\labellist
 \small\hair 2pt
\pinlabel {$1$} [l] at   78 183
\pinlabel {$2$}   at   15 118
\pinlabel {$3$}  [l] at    107 172
\pinlabel {$4$}  [r] at 32 103
\pinlabel {$5$}  [l] at   147 104
\pinlabel {$6$}  [r] at  58 75
\pinlabel {$8$}  [l] at    117 63
\pinlabel {$7$}  [r] at  62 42
\endlabellist
\includegraphics[scale=.8]{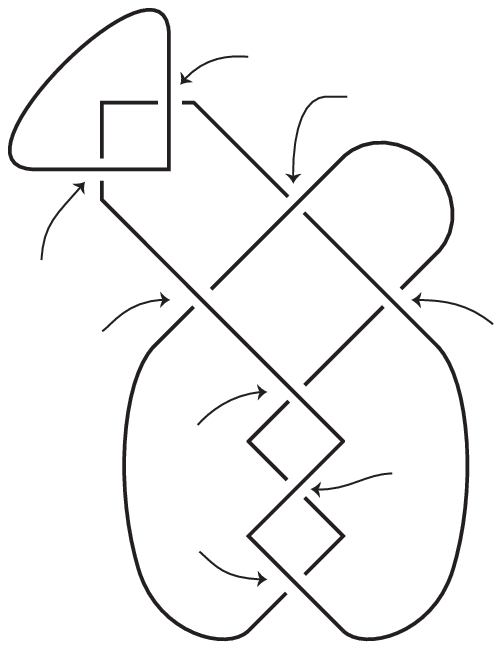}
\label{f4c}
}

\subfigure[An arrow presentation of $\mathbb{A}(D)$. ]{
\labellist
 \small\hair 2pt
\pinlabel {$1$}  at    73 148
\pinlabel {$2$}   at   18 122
\pinlabel {$3$}  at   103 167 
\pinlabel {$4$}  [r] at  35 106
\pinlabel {$5$}   at    149 106
\pinlabel {$6$}  [l] at  117 82
\pinlabel {$7$}  [l] at    117 27
\pinlabel {$8$}  [l] at  118 63
\pinlabel {$8$}   at  62 63
\endlabellist
\includegraphics[scale=.8]{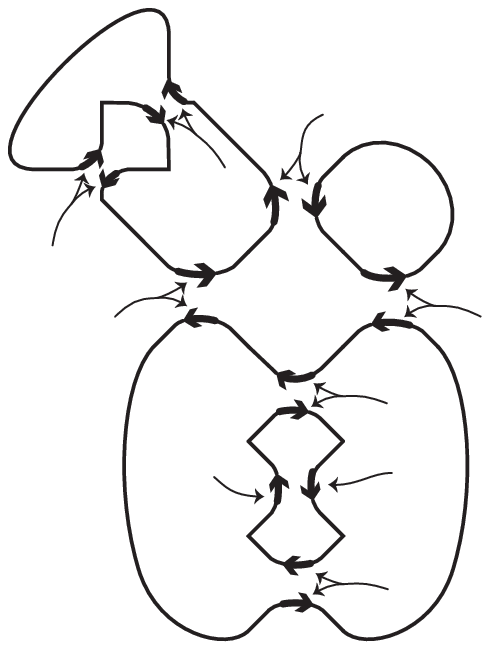}
\label{f4e}
}
\hspace{5mm}
\subfigure[The  ribbon graph $\mathbb{A}(D)$.]{
\labellist
 \small\hair 2pt
\pinlabel {$1$}  at  106 228 
\pinlabel {$2$}   at    25 201
\pinlabel {$3$}  at    144 186
\pinlabel {$4$}   at   83 121
\pinlabel {$5$}   at     146 79
\pinlabel {$6$}  at   131 99
\pinlabel {$7$}   at    166 13
\pinlabel {$8$}   at  250 99
\endlabellist
\includegraphics[scale=.55]{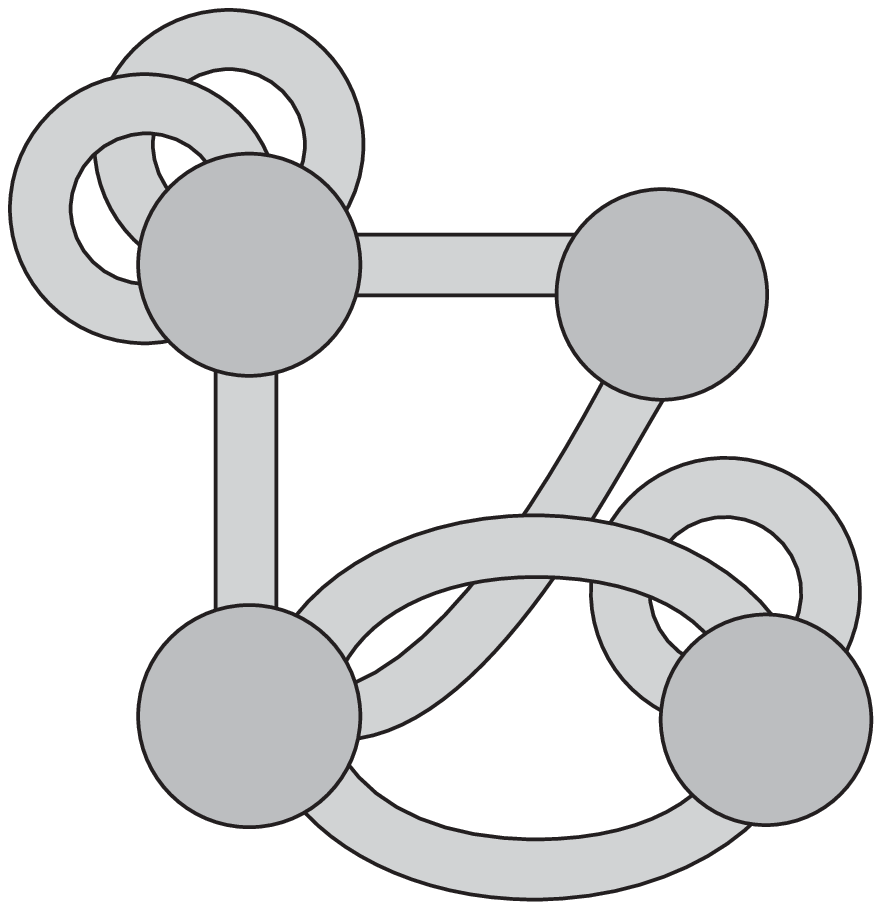}
\label{f4f}
}

\subfigure[An arrow presentation of $\mathbb{A}(D)^{\{ 1,6,7 \}}$. ]{
\labellist
 \small\hair 2pt
\pinlabel {$1$} [l] at    72 186
\pinlabel {$2$}   at   18 122
\pinlabel {$3$}  at   103 167 
\pinlabel {$4$}  [r] at  35 106
\pinlabel {$5$}   at    149 106
\pinlabel {$6$}  [l] at  117 84
\pinlabel {$6$}   at  62 78 %
\pinlabel {$7$}  [l] at    117 37
\pinlabel {$7$}   at    62 37 %
\pinlabel {$8$}  [l] at  118 63
\pinlabel {$8$}   at  62 63
\endlabellist
\includegraphics[scale=.8]{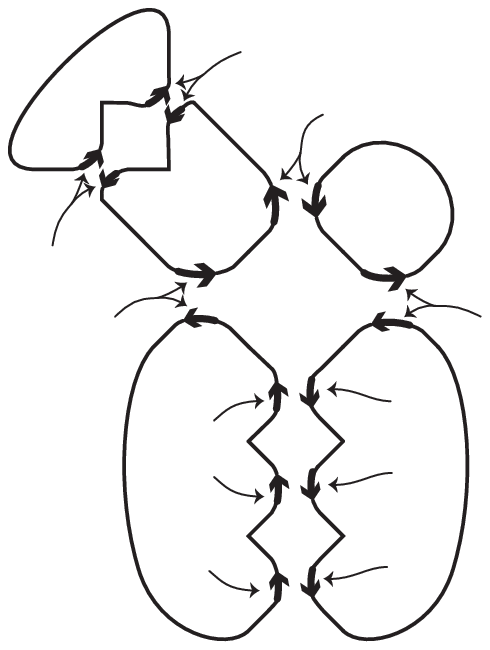}
\label{f4g}
}
\hspace{5mm}
\subfigure[The  ribbon graph $\mathbb{A}(D)^{\{ 1,6,7 \}}$.]{
\labellist
 \small\hair 2pt
\pinlabel {$1$}  at  125 211 
\pinlabel {$2$}   at    53 187
\pinlabel {$3$}  at    160 171
\pinlabel {$4$}   at   109 109
\pinlabel {$5$}   at     215 109
\pinlabel {$6$}  at   161 87
\pinlabel {$7$}   at    161 51
\pinlabel {$8$}   at  161 17
\endlabellist
\includegraphics[scale=.55]{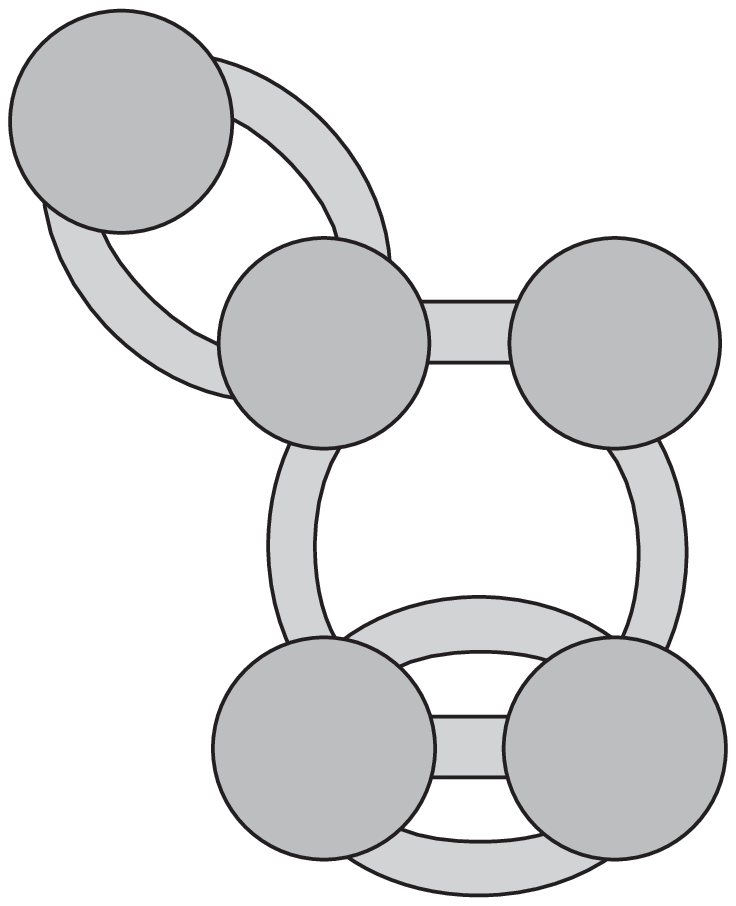}
\label{f4h}
}

\caption{A ribbon graph of a link diagram.}
\label{f4}
\end{figure}

An important observation is that not every ribbon graph arises as the ribbon graph of a link diagram (for example, for $n\geq 3$ there are more ribbon graphs on $n$ edges than there are link diagrams with $n$ crossings). The question of characterising the class of ribbon graphs that represent link diagrams then arises. The following theorem provides an excluded ribbon graph minor characterisation of this class. For the theorem, recall $B_3$ from Example~\ref{ex.b} and Figure~\ref{f.2a}; let $B_{\bar1}$ be the non-orientable bouquet with one edge, as in Figure~\ref{f.2c}; and let $\theta_{t}$ be the toroidal $\theta$-graph, as in Figure~\ref{f.2c}.
\begin{maintheorem}
A ribbon graph represents a link diagram if and only if it contains no ribbon graph minor equivalent to $B_{\bar1}$, $B_3$, or $\theta_{t}$.
\end{maintheorem}
The remainder of this paper is devoted to the proof of this theorem.

\subsection{The connection with partial duals}
When working with the ribbon graphs of link diagrams it is often convenient to use the framework of partial duals. Partial duality was introduced by Chmutov in \cite{Ch}. It arises as a natural operation in knot theory, topological graph theory, graph polynomials, delta-matroids, and quantum field theory.  Roughly speaking, a partial dual of a ribbon graph is obtained by forming the geometric dual with respect to only some of its edges. To make this concrete, let $G=(V(G), E(G))$ be a ribbon graph, $A\subseteq E(G)$ and regard the boundary components of the  ribbon subgraph $(V(G),A)$ of $G$ as curves on the surface of $G$. Glue a disc to $G$ along each of these curves by identifying the boundary of the disc with the curve, and remove the interior of all vertices of $G$. The resulting ribbon graph is the {\em partial dual} $G^{A}$. If $A=\{e\}$ we  write $G^e$ for $G^{\{e\}}$. As an example, if $e$ is any edge of $\theta_t$, then  $\theta_t^e=B_3$.

Partial duality changes the ribbon graph locally at the edges in $A$ and the regions where they meet their incident vertices.  Table~\ref{tablecontractrg} shows the local effect of forming the partial dual with respect to an edge $e$ of a ribbon graph $G$.

Observe from the table (or from the definitions) that $G/e = G^e-e$. We also have that the edges of $G^A$ correspond to the edges of $G$; 
 $G^*=G^{E(G)}$, where $G^*$ is the geometric dual of $G$; $G^{\emptyset}=G$; $ (G^A)^B=  G^{A\triangle B}$ (in particular partial duals can be formed one edge at a time); and partial duality acts disjointly on connected components. 

If $G$ is viewed as an arrow presentation then $G^A$ is obtained as follows. For each $e\in A$, suppose $\alpha$ and $\beta$ are the two $e$-labelled arrows. Place an $e$-labelled arrow from  tip of $\alpha$ to the tail of $\beta$, and an $e$-labelled arrow from  tip of $\beta$ to the tail of $\alpha$.   Delete $\alpha$, $\beta$ and the arcs of the curves (or curve). The resulting arrow presentation describes $G^A$. See  Table~\ref{tablecontractrg} and Figure~\ref{f4}.

Our interest in partial duals here arises from the following proposition. It was formally proven in \cite{Mo12}, although all the ideas behind the result were present in  \cite{Ch}, and for the convenience of the reader we provide a proof here.
\begin{proposition}\label{p1}
A ribbon graph $G$ represents a link diagram if and only if it is a partial dual of a plane ribbon graph.
\end{proposition}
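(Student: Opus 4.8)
The plan is to establish both directions by working with the medial-graph construction and the arrow-presentation description of partial duality. First I would recall the classical correspondence between signed plane graphs and link diagrams via medial graphs: a plane ribbon graph $P$ gives rise to a link diagram $D_P$ whose crossings correspond to the edges of $P$, obtained by taking the medial graph of $P$ and resolving each crossing according to the ribbon structure. The key computational fact to isolate is that, in arrow-presentation language, the arrow marked A-smoothing of a crossing (Figures~\ref{f4a} and~\ref{f4b}) matches exactly the local picture for a partial dual with respect to an edge as shown in Table~\ref{tablecontractrg}. This is the bridge between the two constructions.

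For the "only if" direction, suppose $G = \mathbb{A}(D)$ for a link diagram $D$. I would produce a plane ribbon graph as follows. The diagram $D$, viewed as a $4$-valent plane graph, has a checkerboard colouring of its complementary regions; let $P$ be the plane ribbon graph whose vertices are the black regions and whose edges are the crossings of $D$ (the Tait graph of $D$). Then I would show that $\mathbb{A}(D) = P^A$ for an appropriate $A\subseteq E(P)$ determined by the crossing signs (equivalently, by which smoothing at each crossing follows the black regions). The verification is local and crossing-by-crossing: at a crossing where the A-smoothing agrees with the black checkerboard region, the arrow presentation of $\mathbb{A}(D)$ near that crossing is the untouched edge of $P$; where it disagrees, comparing Figures~\ref{f4a}--\ref{f4b} with the partial-dual row of Table~\ref{tablecontractrg} shows it is exactly $P^e$. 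Since partial duals can be formed one edge at a time and $(P^A)^B = P^{A\triangle B}$, this identifies $\mathbb{A}(D)$ as a partial dual of the plane ribbon graph $P$.

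Conversely, suppose $G = P^A$ for a plane ribbon graph $P$. Since $P$ is plane, it is itself the ribbon graph of a link diagram: indeed $P = \mathbb{A}(D_0)$ where $D_0$ is the all-A link diagram whose medial-graph crossings are smoothed to follow $P$ (for a plane ribbon graph every crossing can be taken with the smoothing consistent with the sphere, so $P$ trivially represents a diagram). Then I would run the previous local analysis in reverse: toggling an edge $e$ into or out of $A$ corresponds exactly to changing the over/under information at the corresponding crossing of $D_0$, by the Table~\ref{tablecontractrg} versus Figure~\ref{f4} comparison. Hence $P^A = \mathbb{A}(D)$ where $D$ is the link diagram obtained from $D_0$ by switching precisely the crossings indexed by $A$. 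This shows every partial dual of a plane ribbon graph represents a link diagram.

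The main obstacle I anticipate is purely bookkeeping rather than conceptual: making the local comparison between the arrow marked A-smoothing of Figures~\ref{f4a}--\ref{f4b} and the partial-dual column of Table~\ref{tablecontractrg} genuinely watertight, including getting orientations of the arrows and the two cases (crossing sign $+$ versus $-$, or equivalently $e\in A$ versus $e\notin A$) to line up correctly, and checking that the global assembly of local pictures into an arrow presentation is consistent (no hidden sign or cyclic-order discrepancy). Once the single-crossing dictionary is pinned down, both directions follow by assembling the local models, using that $\mathbb{A}(D)$ is always orientable and that partial duality acts one edge at a time.
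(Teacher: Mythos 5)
Your proposal is correct and follows essentially the same route as the paper: checkerboard colour the diagram, take the plane ribbon graph whose arrow presentation follows the black faces (the Tait graph), and observe via Table~\ref{tablecontractrg} that partial duality with respect to the B-smoothed (equivalently, switched) crossings converts it into $\mathbb{A}(D)$, with the converse obtained by running the same local dictionary in reverse. The only difference is one of presentation: you separate the two directions explicitly, whereas the paper packages both into a single observation about arrow presentations.
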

\begin{proof}
 Defining  arrow marked B-smoothings analogously to arrow marked A-smoothings (just switch the type of smoothing in Figure~\ref{f4b}),  first observe that checkerboard colouring (i.e., face 2-colouring) $D$ and choosing arrow marked smoothings  that follow the black faces of $D$ at each crossing results in an arrow presentation of a plane ribbon graph. Next observe (recalling Table~\ref{tablecontractrg}) that forming the partial dual in the language of arrow presentations with respect to the edges of the  B-smoothed crossings results in an arrow presentation of $\mathbb{A}(D)$. The result follows. (See Figure~\ref{f4} for an illustration of this argument.)
\end{proof}

We now give a few lemmas about partial duals. The first of which says that 
the ribbon graph minors of a partial dual of $G$ are the partial duals of the ribbon graph minors of $G$. For the lemma we introduce the notation that if $H$ is a ribbon graph minor of $G$, and $A\subseteq E(G)$, then by $H^A$ we mean $H^{A\cap E(H)}$.
\begin{lemma}\label{l1}
Let $G$ be a ribbon graph and $A\subseteq E(G)$. Then 
\[ \{ J^A \mid J \text{ is a ribbon graph minor of } G \}  =  \{ H \mid H \text{ is a ribbon graph minor of } G^A \}   \]
\end{lemma}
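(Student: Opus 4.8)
\textbf{Proof proposal for Lemma~\ref{l1}.}

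The plan is to establish the set equality by proving that each operation used to form a ribbon graph minor---edge deletion, vertex deletion, edge contraction---interacts in a controlled way with partial duality, and then to bootstrap this to sequences of operations. The key algebraic facts already available are $(G^A)^B = G^{A\triangle B}$, $G^{\emptyset}=G$, $G/e = G^e - e$, and $G^e - e = G - e$ when $e$ is a non-loop (more precisely, that $G^A-e$ and $G^{A\setminus\{e\}}-e$ agree, and that deletion and contraction commute). I would first record the single-step exchange rules. For a non-loop edge $e$ of a partial dual, and for an arbitrary edge $e$ after passing to the right partial dual, the two candidate ways of ``first dualise then delete/contract'' versus ``first delete/contract then dualise'' should match up to the bookkeeping convention $H^A := H^{A\cap E(H)}$. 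Concretely: for $e\notin A$ one has $(G^A)-e = (G-e)^A$ and $(G^A)/e = (G/e)^{A}$ when $e$ is a non-loop of $G^A$; the genuinely new phenomenon is contracting an edge $e\in A$, where $(G^A)/e = (G^A)^e - e = G^{A\triangle\{e\}} - e = (G - e)^{A\setminus\{e\}}$, i.e.\ contracting a dualised edge corresponds to \emph{deleting} it before dualising. Symmetrically, deleting $e\in A$ from $G^A$ corresponds to contracting $e$ in $G$ before dualising. So each elementary move on $G^A$ is matched by an elementary move on $G$ (of possibly the opposite type), and vice versa, with the edge subset tracked by symmetric difference intersected with the surviving edge set. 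Vertex deletion is easy to handle since we may assume only isolated vertices are deleted, and partial duality does not interact with isolated vertices; alternatively one reduces vertex deletion to a sequence of edge deletions followed by the removal of resulting isolated vertices.

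For the inclusion $\subseteq$: given a ribbon graph minor $J$ of $G$, realised by a sequence of moves, I would push the partial dual through the sequence one move at a time using the exchange rules above. After processing all moves, I obtain a sequence of moves applied to $G^A$ (with each contraction-of-a-dualised-edge swapped to a deletion and conversely), whose result is exactly $J^{A\cap E(J)} = J^A$; hence $J^A$ is a ribbon graph minor of $G^A$. For the reverse inclusion $\supseteq$: given a ribbon graph minor $H$ of $G^A$, apply the same argument to the pair $(G^A, A)$, using $(G^A)^A = G^{A\triangle A} = G$, to conclude $H^{A} = H^{A\cap E(H)}$ is a ribbon graph minor of $(G^A)^A = G$; writing $J := H^A$, we then have $H = J^{A}$ with $J$ a ribbon graph minor of $G$, placing $H$ in the left-hand set. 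The fact that the result of a sequence of deletions and contractions is independent of their order (stated earlier in the excerpt) is what lets me reorder moves freely while pushing the dual through.

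The main obstacle I anticipate is purely organisational rather than deep: getting the bookkeeping of edge subsets exactly right under the convention $H^A := H^{A\cap E(H)}$, and carefully justifying the ``opposite type'' swap for edges lying in $A$---that is, verifying $(G^A)/e = (G-e)^{A\setminus\{e\}}$ and $(G^A)-e = (G/e)^{A\setminus\{e\}}$ for $e\in A$ directly from $G/e=G^e-e$, $(G^A)^B=G^{A\triangle B}$, and commutativity of deletion with deletion and of contraction with deletion on disjoint edges. One subtlety to check is that the type of an edge (loop versus non-loop), and hence which of the basic identities applies, can change under partial duality; but since every contraction can be rewritten via $G/e = G^e - e$ as a partial dual followed by a deletion, one can in fact reduce the entire argument to just two move-types---deletion and single-edge partial duality---for which the commutation relations are immediate from $(G^A)^B = G^{A\triangle B}$ and the elementary fact that deletion commutes with single-edge partial duality at a different edge. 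I would structure the final write-up around that reduction to minimise case analysis.
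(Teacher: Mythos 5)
Your proposal is correct and follows essentially the same route as the paper: the paper reduces to $A=\{e\}$ and records exactly your exchange identities --- locality for edges $f\neq e$, and the deletion/contraction swap $(G-e)^e=(G^e)/e$ and $(G/e)^e=(G^e)-e$ at the dualised edge --- before dealing with isolated vertices. (One unused item in your preliminary list of ``key facts'' is false as stated: $G^e-e$ equals $G/e$, not $G-e$, for a non-loop $e$; but none of the identities you actually derive and use depends on it.)
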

\begin{proof}
The result is easily verified when $|E(G)|\in \{0,1\}$ or  $A=\emptyset$, so assume that this is not the case.
Since partial duals can be formed one edge at a time it is enough to prove the theorem for $A=\{e\}$. Let $f\in E(G)$ with $f\neq e$. Since edge deletion, contraction and partial duality change the ribbon graph locally at the edge involved, we have that $(G-f)^e=G^e-f$ and $(G/f)^e=(G^e)/f$. 
Next 
$ (G-e)^e = G-e = (G^{e})^e-e = (G^e)/e$, 
where the first equality is by definition as $e\not\in E(G-e)$, the second since partial duality is involutary, and the third by the relation  $G/e = G^e-e$ between partial duality and contraction. Similarly, $(G/e)^e=G/e=(G^e)-e$.
From these identities it follows that  $H$ is obtained from $G^A$ by deleting and contracting edges if and only if $H=J^A$ from some $J$ obtained from $G$ by deleting and contracting edges.  Since deleting isolated vertices of $H$ and $J^A$ correspond, this statement also holds when the operation of vertex deletion is included, and the result follows.
\end{proof}

\begin{lemma}\label{l3}
Let $G$ be a ribbon graph. 
\begin{enumerate}
\item If $H$ is a ribbon graph minor of $G$, then $g(H)\leq g(G)$  (respectively, $\gamma(H)\leq \gamma(G)$). In particular, for each $k\in \mathbb{N}_0$ the set of ribbon graphs of genus (respectively, Euler genus) at most $k$ is ribbon graph minor-closed.
\item \label{l3b} For each $k\in \mathbb{N}_0$ the set 
of all ribbon graphs that have a partial dual of genus (or Euler genus) at most $k$ is ribbon graph minor-closed.
\end{enumerate}
\end{lemma}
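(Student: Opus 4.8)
\textbf{Proof proposal for Lemma~\ref{l3}.}

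The plan is to treat the two parts separately, with part (1) doing most of the work. For part (1), since a ribbon graph minor is obtained by a finite sequence of edge deletions, vertex deletions, and edge contractions, it suffices to check that none of these three operations increases the genus (respectively the Euler genus). Deleting an edge or an isolated vertex clearly cannot increase the genus of the underlying surface, since we are removing a disc that is glued along part of its boundary, and such a removal can only simplify the surface (it either leaves the Euler characteristic unchanged up to adding a boundary component, or increases it); in any case it does not raise the genus. The one case that needs a small argument is contracting an edge $e$. Here I would invoke the relation $G/e = G^e - e$ established in the excerpt: contracting $e$ is the same as first forming the partial dual $G^e$ and then deleting $e$. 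Partial duality is a homeomorphism-invariant-free operation that does \emph{not} in general preserve genus, so instead I would argue directly. The cleanest route is the Euler-characteristic bookkeeping: if $e$ is a non-loop edge, $G/e$ has one fewer vertex and one fewer edge, the same number of boundary components, and the same number of connected components, so its Euler genus is unchanged; if $e$ is a loop, contraction removes one edge and can only add vertices/components and change the number of boundary components in a way that, via the Euler genus formula $\gamma(G) = 2|E(G)| - 2(v(G) - k(G)) - f(G) + 2\cdot(\text{something})$... — more carefully, I would use $\gamma(G) = 2|E(G)| + 2k(G) - v(G) - f(G)$ where $f$ counts boundary components and $k$ counts components, and check case by case from Table~\ref{tablecontractrg} (the three columns: non-loop, non-orientable loop, orientable loop) that $\gamma(G/e) \le \gamma(G)$, with equality except when a non-orientable loop is contracted. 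An alternative, and perhaps slicker, approach: $G/e$ is a ribbon subgraph of the band-surface obtained from $G$ after an ambient modification, but I think the direct Euler-characteristic computation on the (at most three) local pictures in the table is the least error-prone. Orientability is preserved under deletion, and under contraction orientability can only be gained, never lost in a way that would increase $g$; combined with the genus bound this gives the Euler-genus statement as well. The ``in particular'' clause is then immediate: genus (or Euler genus) at most $k$ is preserved under taking ribbon graph minors.

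For part (2), let $\mathcal{G}_k$ be the set of ribbon graphs that have some partial dual of genus (or Euler genus) at most $k$, and suppose $G \in \mathcal{G}_k$ with $H$ a ribbon graph minor of $G$; I must produce a partial dual of $H$ of genus at most $k$. By hypothesis there is $A \subseteq E(G)$ with $g(G^A) \le k$. Apply Lemma~\ref{l1} with this $A$: since $H$ is a ribbon graph minor of $G$, the set $\{J^A \mid J \text{ a ribbon graph minor of } G\}$ equals the set of ribbon graph minors of $G^A$, so in particular $H^A$ (meaning $H^{A \cap E(H)}$, per the convention set up before Lemma~\ref{l1}) is a ribbon graph minor of $G^A$. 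By part (1) applied to $G^A$, we get $g(H^A) \le g(G^A) \le k$, so $H^A$ witnesses $H \in \mathcal{G}_k$. The Euler-genus version is identical. Hence $\mathcal{G}_k$ is ribbon graph minor-closed.

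The main obstacle is the single genuinely non-formal point in part (1): verifying that contracting a loop does not increase the (Euler) genus, since loop contraction can change the number of vertices, components, and boundary components all at once. Everything else is bookkeeping or a direct appeal to Lemma~\ref{l1}. I would handle the loop case by writing down the Euler genus in terms of $|E|$, $v$, the number of connected components, and the number of boundary components, and then reading off from the local pictures in Table~\ref{tablecontractrg} exactly how each of these quantities changes — the key qualitative fact being that contracting an orientable loop splits one vertex into two (or merges/changes boundary structure) while keeping the surface's complexity from growing, and contracting a non-orientable loop strictly decreases the Euler genus.
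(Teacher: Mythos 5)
Your overall strategy is sound and, for part~(2), genuinely different from (and arguably cleaner than) the paper's. The paper proves part~(1) in one line by observing that $G-e$ and $G/e$ naturally embed in $G$ as surfaces, so genus cannot increase; it only mentions the Euler-formula computation as an alternative, which is the route you take. For part~(2) the paper does \emph{not} invoke Lemma~\ref{l1}: it redoes the commutation identities by hand, checking for each edge $e$ that $(G/e)^A$, $(G-e)^A$ (when $e\notin A$) and $(G/e)^{A\setminus\{e\}}$, $(G-e)^{A\setminus\{e\}}$ (when $e\in A$) each equal $G^A/e$ or $G^A-e$, and then applies part~(1). Your appeal to Lemma~\ref{l1} --- $H$ a minor of $G$ implies $H^A$ is a minor of $G^A$, hence $g(H^A)\le g(G^A)\le k$ by part~(1) --- packages exactly those identities into a single prior lemma and handles an arbitrary minor in one step rather than edge by edge. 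That part of your proposal is correct and complete.

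Part~(1) as written has two concrete errors and one deferral. First, the Euler genus formula you settle on, $\gamma(G)=2|E(G)|+2k(G)-v(G)-f(G)$, is wrong: the paper's Euler formula $v-e+f=2k-\gamma$ gives $\gamma(G)=2k(G)-v(G)+e(G)-f(G)$, with coefficient $1$ on $e(G)$. With your formula a non-loop contraction ($v$ and $e$ each drop by $1$, $f$ and $k$ fixed) would appear to change $\gamma$ by $-1$ instead of $0$, so the case check would come out wrong if carried out literally. Second, the claim that equality holds ``except when a non-orientable loop is contracted'' is false: contracting an interlaced orientable loop can drop the Euler genus by $2$ (e.g.\ $B_2$, a once-punctured torus, contracts to a path, which is a disc). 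Neither error is fatal to the inequality $\gamma(G/e)\le\gamma(G)$, which is all you need, but the crux case --- loop contraction, where $v$, $k$ and $f$ can all change --- is precisely the computation you defer to ``reading off the table,'' so the one non-formal step of part~(1) is still owed. The paper's embedding observation sidesteps this bookkeeping entirely and is worth adopting.
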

\begin{proof}
For the first item of the lemma begin by noting that deleting an isolated vertex does not change the genus of a ribbon graph. It then remains to show that deleting or contracting an edge can not increase genus, but this follows since $G/e$ and $G-e$ naturally embed in $G$ (examine Table~\ref{tablecontractrg}).  Note that this item can also be proven using Euler's formula.

For the second item of the lemma, let $G$ be a ribbon graph. The result is trivial if $E(G)=\emptyset$, so assume that this is not the case. Let $e\in E(G)$ and  suppose that $g(G^A)\leq k$ for some $A\subseteq E(G)$. 
If $e\notin A$, then since partial duality and edge contraction act locally $(G/e)^A=G^A/e$,
so by the first item of the lemma, $g((G/e)^A) = g(G^A/e)\leq  g(G^A)=k$. 
Similarly $(G-e)^A = G^A-e$ giving that $g((G-e)^A)\leq  g(G^A)=k$.
If $e\in A$, then 
$(G/e)^{A\backslash \{e\} }=(G^e-e)^{A\backslash \{e\} }=G^{A}-e$,
giving that $g((G/e)^{A\backslash \{e\} })\leq g(G^A)=k$.
Similarly, 
$(G-e)^{A\backslash \{e\} }=G^{A\backslash \{e\} }-e=(G^{A})^e-e=G^A/e $, giving that $g((G-e)^{A\backslash \{e\} })\leq g(G^A)=k$.
In all cases we see that $G/e$ and $G-e$ have a partial dual of genus at most $k$. Finally, if $v$ is an isolated vertex then $g((G-v)^A) = g((G^A-v)) = g(G^A) = k$. It follows from this that if $G$ has a partial dual of genus $k$, then each ribbon graph minor of $G$ has a partial dual of genus at most $k$. The Euler genus claim can be obtained by replacing genus with Euler genus in this argument.
\end{proof}

\begin{lemma}\label{l4}
The set of ribbon graphs that represent link diagrams is ribbon graph minor-closed.
\end{lemma}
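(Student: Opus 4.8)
The plan is to reduce the statement to results already established; after the reduction essentially nothing remains. The first step is to rephrase the defining property using Proposition~\ref{p1}: a ribbon graph represents a link diagram if and only if it is a partial dual of a plane ribbon graph, which (since ``plane'' means genus $0$, equivalently Euler genus $0$) is the same as having a partial dual of genus at most $0$. Thus the set of ribbon graphs that represent link diagrams is precisely the set of ribbon graphs admitting a partial dual of genus at most $0$.

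The second step is to quote the second item of Lemma~\ref{l3} with $k=0$, which states that this set is ribbon graph minor-closed; together with the first step, this is exactly the assertion of the lemma.

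Should one prefer an argument that does not route through the second item of Lemma~\ref{l3}, I would instead argue directly. Write $G=Q^A$ with $Q$ a plane ribbon graph and $A\subseteq E(Q)=E(G)$, and let $H$ be a ribbon graph minor of $G$. Applying Lemma~\ref{l1} with $Q$ in place of $G$, every ribbon graph minor of $Q^A=G$ has the form $J^{A\cap E(J)}$ for some ribbon graph minor $J$ of $Q$; choose such a $J$ with $H=J^{A\cap E(J)}$. Since $Q$ is plane, the first item of Lemma~\ref{l3} gives $g(J)\le g(Q)=0$, so $J$ is plane, whence $H$ is a partial dual of a plane ribbon graph and represents a link diagram by Proposition~\ref{p1}.

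I do not expect any genuine obstacle here: the lemma is essentially a corollary of Proposition~\ref{p1} and Lemmas~\ref{l1}--\ref{l3}. The only points needing a moment's care are bookkeeping ones --- matching ``plane'' with ``genus $0$'', and observing that the degenerate cases ($E(G)=\emptyset$ or $A=\emptyset$) and the deletion of isolated vertices are already absorbed into the statements of Lemmas~\ref{l1} and~\ref{l3}.
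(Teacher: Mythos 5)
Your proposal is correct and follows exactly the paper's own route: the paper proves this lemma by citing Proposition~\ref{p1} together with Lemma~\ref{l3}\eqref{l3b} at $k=0$, which is your first argument verbatim. The alternative direct argument via Lemma~\ref{l1} is a valid bonus but adds nothing the paper needs.
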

\begin{proof}
The result follows immediately from Propostion~\ref{p1} and Lemma~\ref{l3}\eqref{l3b} with $k=0$.
\end{proof}

\subsection{The proof of the main theorem}

To prove the main theorem, we use a rough structure theorem for the partial duals of plane ribbon graphs from \cite{Mo12} (see also \cite{Mo13}). This rough structure theorem guarantees a decomposition of a partial dual of a plane ribbon graph  into a set of plane ribbon graphs. To describe the result we need  some additional notation.  
A vertex $v$ of a ribbon graph $G$ is  a {\em separating vertex} if there are non-trivial ribbon subgraphs $G_1$ and $G_2$ of $G$ such that $G=G_1\cup G_2$ and $G_1\cap G_2=\{v\}$. 
Let $G$ be a ribbon graph and $A\subseteq E(G)$. 
We use $G|_A$ to denote the restriction of $G$ to $A$, i.e.,  the ribbon subgraph of $G$ that consists exactly of the edges in $A$ and their incident vertices.  
We say that  $A$ {\em defines a plane-biseparation} of $G$ if 
(1) all of the components of $G|_A$ and $G|_{A^c}$ are plane,
(2) every vertex of $G$ that is in both $G|_A$ and $G|_{A^c}$ is a separating vertex of $G$.
We say that a ribbon graph $G$ {\em admits a plane-biseparation} if  there is some $A\subseteq E(G)$ that defines a plane-biseparation of $G$.
As an example,  the ribbon graph in Figure~\ref{f4f} admits plane-biseparations. The edge sets $\{1,6,7\}$, $\{2,6,7\}$, $\{2,3,4,5,8\}$, and $\{1,3,4,5,8\}$ are exactly those that define plane-biseparations.

\begin{theorem}[\cite{Mo12} Theorem~6.1]\label{t2}
Let $G$ be a  ribbon graph and $A\subseteq E(G)$. Then $G^A$ is a plane ribbon graph if and only if 
 $A$ defines a plane-biseparation of $G$.
\end{theorem}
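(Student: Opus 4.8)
The plan is to prove both implications by induction on $|A|$, using the fact (established in the discussion of partial duality) that partial duals can be formed one edge at a time, that $(G^A)^B = G^{A \triangle B}$, and that $G^e - e = G/e$. For the base cases, $A = \emptyset$ gives $G^A = G$, and here $A$ defines a plane-biseparation exactly when $G$ itself is plane, since then $G|_{A^c} = G$ must be plane and the separating-vertex condition is vacuous; this matches. The case $|A| = |E(G)|$ is the geometric dual $G^* = G^{E(G)}$, which is plane if and only if $G$ is plane, and again this is the plane-biseparation with $A = E(G)$. The real content is the inductive step.

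For the inductive step I would argue as follows. Pick an edge $e$; write $A' = A \triangle \{e\}$. The key local observation (from Table~\ref{tablecontractrg} and the fact that partial duality, deletion, and contraction all act locally at $e$ and commute with operations on other edges) is that forming $G^A$ can be split as: first partial-dualise over $A \setminus \{e\}$ inside the appropriate subgraph, then handle $e$. More usefully, I would relate $G^A$ to the partial duals of the two ribbon graphs $G/e$ and $G-e$ (or $G^e$), each having one fewer edge, and invoke the inductive hypothesis on those. Concretely: if $e \notin A$ then $(G-e)^A = G^A - e$ and $(G/e)^A = G^A/e$; if $e \in A$ then $(G-e)^{A \setminus \{e\}} = G^A/e$ and $(G/e)^{A \setminus \{e\}} = G^A - e$ (these are exactly the identities proved in the proof of Lemma~\ref{l1}). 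So $G^A$ is plane if and only if both $G^A - e$ and $G^A / e$ are plane \emph{and} $e$ is not a ``genus-creating'' edge relative to this decomposition — one needs to control how $e$ sits relative to the plane pieces coming from the induction. This is where I would use the arrow-presentation picture: an edge $e$ of $G$ becomes, in $G^A$, either an edge joining two boundary components of $G|_A$ (when $e \in A$) or an ordinary edge, and planarity of $G^A$ forces that $e$ not connect two vertices of $G^A$ that would be merged into a handle.

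The translation to the plane-biseparation condition is the combinatorial heart. I would show: the components of $G|_A$ being plane corresponds, under partial duality, to the vertices of $G^A$ that arose from $A$-boundary components fitting together without genus; the components of $G|_{A^c}$ being plane corresponds to the surviving vertex-structure being planar; and the separating-vertex condition (2) is precisely what rules out the ``bad'' gluings — a vertex lying in both $G|_A$ and $G|_{A^c}$ that is \emph{not} separating is exactly the obstruction that produces positive genus in $G^A$, matching the $B_3$-type and $\theta_t$-type local pictures. So the inductive step reduces to: choosing $e$ appropriately (say a non-loop edge of $G|_A$ if one exists, or analysing the loop case separately), checking that ``$A$ defines a plane-biseparation of $G$'' is equivalent to ``$A \cap E(G-e)$, resp.\ $A \cap E(G/e)$, defines a plane-biseparation of $G - e$, resp.\ $G/e$, plus a local condition at $e$,'' and matching that local condition with planarity of $G^A$ via the two identities above.

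The main obstacle I anticipate is the bookkeeping around loops and around how contraction of a loop in $G$ can split a vertex or create a component — the phenomenon flagged in the introduction and Example~\ref{ex.b}. When $e$ is a loop of $G|_A$, contracting it changes the vertex set in a way that interacts delicately with both the ``all components plane'' condition and the separating-vertex condition, so I would need to verify carefully that the plane-biseparation property is preserved/reflected under such a contraction. A clean way to sidestep some of this may be to first reduce, using connectivity of components and acting on components disjointly (partial duality acts disjointly on components, as noted), to the case where $G$ is connected, and then to induct choosing $e$ so that at least one of $G - e$, $G/e$ remains connected. Even so, confirming that condition (2) is exactly — no more, no less — the planarity obstruction for $G^A$ is the step that will require the most care, and it is essentially a local surface-topology computation of the genus contributed by an edge straddling $A$ and $A^c$.
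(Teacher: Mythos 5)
You have not given a proof; you have given a plan whose central step is missing. The identities you quote from the proof of Lemma~\ref{l1} are correct, and they do reduce $G^A$ being plane to statements about partial duals of the smaller ribbon graphs $G-e$ and $G/e$ (so the induction should really run on $|E(G)|$, not on $|A|$). But the inductive step then requires two things you never supply: (i) a precise identification of the ``local condition at $e$'' under which planarity of $G^A-e$ and $G^A/e$ implies planarity of $G^A$ --- as written, ``$e$ is not a genus-creating edge'' is circular, since deciding which edges create genus in $G^A$ in terms of $G$ and $A$ \emph{is} the content of the theorem; and (ii) a proof that ``$A$ defines a plane-biseparation of $G$'' is equivalent to ``the induced set defines a plane-biseparation of $G-e$ (resp.\ $G/e$) plus that same local condition.'' Point (ii) is genuinely delicate because the separating-vertex condition is global (it refers to a decomposition $G=G_1\cup G_2$ meeting in a single vertex), and contracting a loop can split a vertex into several vertices or components, so the condition is not obviously preserved or reflected. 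You flag exactly this difficulty in your final paragraph and then leave it unresolved, which is where the proof stops being a proof.

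For comparison: the paper does not prove Theorem~\ref{t2} at all --- it is imported from \cite{Mo12} --- and the intuition it records is a direct structural argument rather than an induction. One writes $G^A$ as the union of the two ribbon subgraphs $G^A-A^c$ and $G^A-A$, observes that $G^A-A^c$ is the geometric dual of $G-A^c$ (so planarity of $G^A$ forces $G|_A$ plane), argues that planarity of $G|_{A^c}$ controls the non-dualled part $G^A-A$, and then shows that the separating-vertex condition is exactly what prevents the union of these two plane pieces from carrying homology. If you want to complete your argument you would be better served by making that decomposition precise (tracking boundary components of $(V(G),A)$ directly) than by trying to push a deletion--contraction induction through the loop-contraction bookkeeping.
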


Since Theorem \ref{t2} is a key result in the proof of Theorem~\ref{mt1} we  pause briefly to describe some of the intuition behind it. 
 Recall that $G^A$ is formed by gluing discs to $G$ to cap of each hole of $(V(G),A )=G-A^c$, and then removing the interiors of all vertices of $G$.   If $G^A$ is plane its two ribbon subgraphs $G^A-A^c$ and $G^A-A$ are plane. Since $G^A-A^c$ equals the geometric dual of $G-A^c$, we have that $G-A^c$, and so $G|_A$, must be plane. The planarity of the ribbon subgraph $G^A-A$, consisting of the non-dualled edges,   can be ensured by insisting that  $G-A$, and so $G|_{A^c}$, is plane (see  Figure~9 of \cite{Mo12} for an indication why).  Finally it must be that no homology arises from the fact that $G^A$ is union of  the two ribbon subgraphs $G^A-A^c$ and $G^A-A$. This can be ensured by insisting that  $G|_A$ and $G|_{A^c}$ only meet at separating vertices of $G$ (see  Figure~7 of \cite{Mo13} for an indication why). Combining these three conditions gives the definition of a plane-biseparation, and a formal proof of Theorem~\ref{t2} can be based upon these ideas.

A further concept we need for the proof of the main theorem is that of an intersection graph. Let $G$ be a bouquet. The {\em intersection graph} $I(G)$ of $G$  is the graph with vertex set $E(G)$ and in which two vertices $e$ and $f$ of $I(G)$ are adjacent if and only if their ends are met in the cyclic order $e\, f\, e\; f$ when travelling round the boundary of the unique vertex of $G$. (If $G$ is viewed as a chord diagram, then $I(G)$ is exactly the intersection graph of this chord diagram.)

We can now prove the main theorem.
\begin{proof}[Proof of  Theorem~\ref{mt1}]
First, since the set of ribbon graphs that represent link diagrams is ribbon graph minor-closed (by Lemma~\ref{l4}) and $B_{\bar1}$, $B_3$, or $\theta_{t}$  do not present link diagrams (since, by direct computation or by observing they do not admit plane-biseparations, they are not partial duals of plane ribbon graphs), it follows that if $G$ represents a link diagram it can contain no ribbon graph minor equivalent to $B_{\bar1}$, $B_3$, or $\theta_{t}$.

For the converse, suppose that $G$ does not represent a link diagram. Without loss of generality we assume that $G$ is connected. If $G$ is non-orientable then it contains a cycle $C$ homeomorphic to a M\"obius band. Deleting all of the vertices and edges of $G$ not in $C$ then contracting all but one edge of $C$ gives a $B_{\bar1}$-ribbon graph minor of $G$. Now suppose that $G$ is orientable. 
By Proposition~\ref{p1}, $G$ is not a partial dual of a plane ribbon graph.  Let $T\subseteq E(G)$ be the edge set of a spanning tree of $G$. Then $G^T$ has exactly one vertex. Moreover, $G^T$ is not a partial dual of a plane ribbon graph since $G$ is not, and so by Theorem~\ref{t2} we have that $G^T$ does not admit a plane-biseparation. It follows that its intersection graph $I(G^T)$ is not bipartite. (If $I(G^T)$ was bipartite then consider a proper 2-colouring of it. The set of vertices of each colour each induce a subgraph of $G^T$.  Since no vertices of a given colour in $I(G^T)$ are adjacent each of the induced subgraphs of $G^T$ is plane. It follows that the edge set of $G^T$ determined by the vertices of a given colour induces a plane-biseparation of $G^T$, a contradiction, c.f. the proof of Proposition~6 of \cite{deF}.) 
Since $I(G^T)$ is not bipartite it contains an odd cycle  of length at least 3. Let $C$ be a minimal odd cycle. The vertices in $C$ induce a subgraph of $G^T$ and since $C$ is a minimal odd cycle this subgraph is equivalent to $B_{2k+1}$, for some $k\in \mathbb{N}$.  Hence, by Example~\ref{ex.b}, $G^T$ has a $B_3$-ribbon graph minor. 
Finally, by Lemma~\ref{l1}, it follows that $G$ has a ribbon graph minor that is equivalent to a partial dual of $B_3$, but up to equivalence $B_3$ has exactly two partial duals, itself and  $\theta_{t}$. Thus we have shown that  $G$ contains a ribbon graph minor equivalent to $B_{\bar1}$, $B_3$, or $\theta_{t}$ completing the proof.
\end{proof}

\subsection{Excluded minors for genus}

Lemma~\ref{l3} gave that ribbon graphs of bounded genus or Euler genus are minor closed. For completeness, we discuss excluded minor characterisations for these classes. Recall that both genus and Euler genus are additive over connected components, and  Euler's formula 
$v(G)-e(G)+f(G)=2k(G)-\gamma(G)$ where   $v(G)$, $e(G)$, $f(G)$, and $k(G)$ are the numbers of vertices, edges, boundary components, and connected components of $G$, respectively.

Let $\mathcal{B}_{n}$ denote the set of ribbon graphs  in which each connected component is a non-trivial bouquet with exactly 1 boundary component; and such that  every element is of Euler genus $n+1$ if $n$ is odd, and if $n$ is even the orientable elements are of Euler genus $n+2$, and non-orientable elements are of Euler genus $n+1$.
In addition, let $\mathcal{B}^{o}_{n}$ denote the set of orientable elements of $\mathcal{B}_{n}$.

\begin{theorem}\label{t.g} The following hold.
\begin{enumerate}
\item\label{t.g1} A ribbon graph is orientable if and only if it contains no ribbon graph minor equivalent to $B_{\bar1}$.

\item \label{t.g2}  A ribbon graph has  Euler genus at most $n$ if and only if it contains no ribbon graph minor equivalent to an element of $\mathcal{B}_{n}$.

\item \label{t.g3}  An orientable ribbon graph has  genus at most $n$ if and only if it contains no ribbon graph minor equivalent to an element of $\mathcal{B}^{ o}_{2n}$.
\end{enumerate}
\end{theorem}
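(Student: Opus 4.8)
The plan is to treat the three parts in turn, deducing part~\eqref{t.g3} from part~\eqref{t.g2} together with the fact that every ribbon graph minor of an orientable ribbon graph is again orientable — this follows, just as in the proof of Lemma~\ref{l3}, because $G-e$, $G/e$ and $G-v$ all naturally embed in $G$ and so, being subsurfaces of an orientable surface, are orientable. For part~\eqref{t.g1}: the forward implication is immediate since minors of orientable ribbon graphs are orientable whereas $B_{\bar1}$ is not; for the converse, if $G$ is non-orientable then, exactly as in the proof of Theorem~\ref{mt1}, it contains a cycle $C$ homeomorphic to a M\"obius band, and deleting every vertex and edge not in $C$ and then contracting all but one edge of $C$ (each such contraction being of a non-loop edge, it preserves Euler genus) yields a one-vertex one-edge ribbon graph of Euler genus $1$, namely $B_{\bar1}$.

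For parts~\eqref{t.g2} and~\eqref{t.g3} the forward directions are immediate from Lemma~\ref{l3}, since ribbon graphs of Euler genus at most $n$ are minor-closed and every element of $\mathcal{B}_n$ (hence of $\mathcal{B}^o_{2n}$) has Euler genus exceeding $n$ by definition. The substance is the converse of~\eqref{t.g2}: if $\gamma(G)\ge n+1$ then $G$ has a minor equivalent to an element of $\mathcal{B}_n$. I would take a ribbon graph minor $H$ of $G$ that is minimal in the minor order subject to $\gamma(H)\ge n+1$ (one exists, as $G$ has finitely many minors up to equivalence) and show $H\in\mathcal{B}_n$. Minimality rules out non-loop edges (contracting one preserves Euler genus) and isolated vertices, so $H$ is a disjoint union of non-trivial bouquets. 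Minimality also forces each component $B$ of $H$ to have a single boundary component: if $f(B)\ge2$ then the geometric dual $B^*$ is connected with at least two vertices, so has a non-loop edge $e^*$, and using the identity $(B-e)^*=B^*/e$ together with the facts that full geometric duality and contraction of a non-loop both preserve Euler genus we get $\gamma(B-e)=\gamma(B^*/e^*)=\gamma(B^*)=\gamma(B)$, contradicting minimality. Finally one determines $\gamma(H)$ exactly. For a bouquet $B$ with $f(B)=1$, Euler's formula gives $\gamma(B)=e(B)$ and $\gamma(B-e)=\gamma(B)-f(B-e)$ with $f(B-e)\ge1$; and since $f(B)=1$ the dual $B^*$ is again a bouquet, orientable precisely when $B$ is. If $B^*$ is orientable then all its bands are untwisted, whence $f(B-e)=v((B-e)^*)=v(B^*/e)=2$ for every edge, so deleting any edge of such a $B$ drops $\gamma$ by exactly $2$; thus if $H$ is orientable, minimality forces $\gamma(H)\le n+2$, and being even and $\ge n+1$ it equals $n+1$ for $n$ odd and $n+2$ for $n$ even. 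If instead $H$ is non-orientable, some component $B$ is non-orientable, so $B^*$ is a non-orientable bouquet and hence has a single-edge subgraph that is a M\"obius band; via $(B-e)^*=B^*/e$ this gives $f(B-e)=1$ for some $e$, so $\gamma(H-e)=\gamma(H)-1$ and minimality forces $\gamma(H)=n+1$. In every case $\gamma(H)$ is exactly the value prescribed for $\mathcal{B}_n$, so $H\in\mathcal{B}_n$. (The same computations, with the bound $\gamma(J)\le e(J)$ from Euler's formula, show conversely that each element of $\mathcal{B}_n$ is minor-minimal of Euler genus $>n$, so $\mathcal{B}_n$ is an antichain — not needed for the statement, but reassuring.) Part~\eqref{t.g3} then follows: for orientable $G$, $g(G)\le n \iff \gamma(G)\le 2n$, and since all minors of $G$ are orientable, $G$ has a minor in $\mathcal{B}_{2n}$ iff it has one in $\mathcal{B}^o_{2n}=\mathcal{B}_{2n}\cap\{\text{orientable}\}$; applying~\eqref{t.g2} with $2n$ in place of $n$ completes the proof.

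I expect the main obstacle to be showing that the minor-minimal ribbon graphs of Euler genus exceeding $n$ are disjoint unions of bouquets each with one boundary component, and then pinning down their Euler genus. This is where passing to geometric duals is essential: one moves repeatedly between a bouquet and its dual (again a bouquet exactly when the original has one boundary component), exploiting the commutation of deletion, contraction and full duality and the correspondence between the number of boundary components of a single-edge ribbon subgraph and its orientability, in order to track how Euler genus responds to the elementary minor operations. Keeping the orientable and non-orientable cases cleanly separated — which is precisely what produces the $n+1$ versus $n+2$ distinction in the definition of $\mathcal{B}_n$ (and $\mathcal{B}^o_{2n}$) — is the other delicate point.
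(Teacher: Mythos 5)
Your proof is correct, and while its overall skeleton matches the paper's (reduce to a disjoint union of bouquets, then to bouquets with a single boundary component, then control how Euler genus drops under deleting one more edge, with the orientable/non-orientable dichotomy producing the $n+2$ versus $n+1$ values), it differs genuinely at the one technically delicate step. The paper argues constructively: contract spanning trees, delete edges meeting two boundary components, and then, for a non-orientable bouquet with one boundary component, it must exhibit an edge whose deletion preserves the number of boundary components; this is done by a handle-slide argument together with reduction to a normal form (Figure~\ref{f5}), which requires checking how boundary components behave under slides. You instead take a minor-minimal ribbon graph $H$ with $\gamma(H)\ge n+1$ (minimality replaces the paper's explicit contractions and deletions) and, for the key step, pass to the geometric dual: since a component $B$ has one boundary component, $B^*$ is again a bouquet of the same orientability, a non-orientable bouquet contains a non-orientable loop $e$, and the identity $(B-e)^*=B^*/e$ together with $v(B^*/e)=1$ for a non-orientable loop gives $f(B-e)=1$ and hence $\gamma(B-e)=\gamma(B)-1$; the orientable case follows the same pattern (or from parity of $\gamma$). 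This duality argument is cleaner and entirely avoids the handle-slide/normal-form machinery, at the cost of invoking the identities $(G-e)^*=G^*/e$ and $v(G^*)=f(G)$ --- both of which are immediate from the partial-duality formalism already set up in the paper ($G^*=G^{E(G)}$, $G/e=G^e-e$, locality of partial duality). Your closing remark that $\mathcal{B}_n$ is an antichain is indeed not needed, and your deductions of items~\eqref{t.g1} and~\eqref{t.g3} agree with the paper's.
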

\begin{proof}
For each of the three items, the given class of ribbon graphs is ribbon graph minor-closed. The given excluded ribbon graph minors do not belong to the class and so cannot appear as ribbon graph minors. It remains to show that for each  item a ribbon graph not belonging to the given class has a ribbon graph minor belonging to the given set of excluded ribbon graph minors. The argument for Item~\ref{t.g1} appears in the third sentence of the Proof of  Theorem~\ref{mt1}.

For Item~\ref{t.g2},  suppose we are given a ribbon graph $G$ with $\gamma(G)>n$. Choose a spanning tree in each connected component and contract $G$ along all of the edges in the spanning trees. This results in a ribbon graph $G'$ in which each connected component is a bouquet and such that $\gamma(G)=\gamma(G')$. 

\noindent\underline{Claim:} Suppose $B$ is a  bouquet with $\gamma(B)>0$. If $B$ is orientable  then it has a ribbon graph minor of Euler genus $\gamma(B)-2$, and  if $B$ is a non-orientable then it a ribbon graph minor of Euler genus $\gamma(B')-1$. 

Assume the claim is true for the moment.  By its repeated application we can find a minor $G''$ of $G'$ such that $\gamma(G'')$ equals $n+1$ if $n$ was odd;  and if $n$ was even, either $\gamma(G'')=n+2$ with $G''$  orientable, or   $\gamma(G'')=n+1$ with $G''$  non-orientable. Then by repeatedly deleting any edges that intersect two boundary components (which, by Euler's Formula, does not change the Euler genus of the ribbon graph) and deleting any isolated vertices we obtain the required ribbon graph minor in $\mathcal{B}_{n}$. Item~\ref{t.g3}  follows by disregarding  non-orientable  ribbon graphs. 

It remains to prove the claim. Given $B$, recursively delete any edges that intersect two boundary components to obtain a bouquet $B'$ of the same Euler genus but with exactly one boundary component. If $B'$ is orientable then deleting any edge of it will split the boundary component into two, dropping the Euler genus by 2. If $B'$ is non-orientable then deleting an edge might increase the  number of boundary components, or it might not change the number of boundary components. We need to show that there is always some edge whose deletion does not change this number, and so its deletion drops the Euler genus by 1. For this consider the \emph{handle slide} as defined in Figures~\ref{f5a} and~\ref{f5b} which `slides' the end of one edge over an adjacent edge (we make no assumption on the order that the points $1, \ldots , 6$ appear on the vertex of $B'$). Handle slides do not change the number of boundary components or the Euler genus. By checking all of the ways in which the points  $1, \ldots , 6$ can be connected to each other in the unique boundary component of $B'$, observe that if there is an edge whose removal does not change the number of connected components before the handle slide then there is  an edge whose removal does not change the number of boundary components after the handle slide. (For example, if the boundary components is given by connecting 1 to 3, 2 to 5 and 4 to 6, then deleting either edge in Figure~\ref{f5a} and the edge $b$, but not $a$, in Figure~\ref{f5b} preserves the number of boundary components. Note that the statement is false if $B'$ has more than one boundary component, for example,  if the boundary components are  given by connecting 1 to 3, 2 to 6 and 4 to 5,  then deleting the edge $a$ in Figure~\ref{f5a} preserves the number of boundary components, but deleting either edge of in Figure~\ref{f5b}  will change the number of boundary components.) Thus to prove the claim it is enough to show that handle slides can be used to convert $B'$ in to a bouquet with one boundary component that contains a non-orientable loop whose ends are adjacent (deleting this loop clearly does not change the number of boundary components). A standard exercise in low-dimensional topology is showing that handle slides can put any bouquet (often called a disc-band surface in the context of topology) in the normal form illustrated in Figure~\ref{f5c}. The claim and theorem then follow. 
\end{proof}

\begin{figure}
\centering
\subfigure[Slide $b$ over $a$ to the left.]{
\labellist
 \small\hair 2pt
\pinlabel {$1$}  [r] at  10 17 
\pinlabel {$2$} [l]  at   55 17 
\pinlabel {$3$} [r] at    81 17 
\pinlabel {$4$}  [l] at   126 17 
\pinlabel {$5$}  [r] at    154  17 
\pinlabel {$6$}  [l] at   199 17 
\pinlabel {$a$}   at  68 41
\pinlabel {$b$}   at  144 41 

\endlabellist
\includegraphics[scale=0.8]{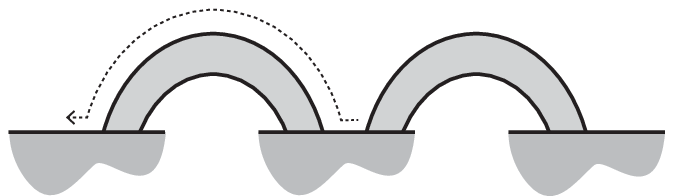}
\label{f5a}
}
\hspace{5mm}
\subfigure[Slide $b$ over $a$ to the right.]{
\labellist
 \small\hair 2pt
\pinlabel {$1$}  [r] at  10 17 
\pinlabel {$2$} [l]  at   55 17 
\pinlabel {$3$} [r] at    81 17 
\pinlabel {$4$}  [l] at   126 17 
\pinlabel {$5$}  [r] at    154  17 
\pinlabel {$6$}  [l] at   199 17 
\pinlabel {$a$}   at  68 41
\pinlabel {$b$}   at  98 73 
\endlabellist
\includegraphics[scale=.8]{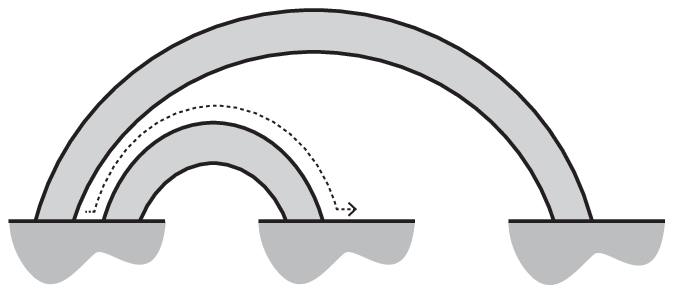}
\label{f5b}
}

\subfigure[A normal form for a ribbon graph.]{
\includegraphics[scale=1]{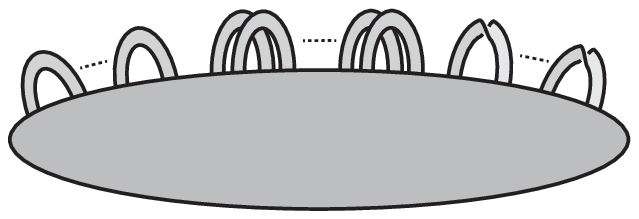}
\label{f5c}
}

\caption{Handle slides.}
\label{f5}
\end{figure}

\end{document}